\newtheorem{thm}{Theorem}[section]
\newtheorem{cor}[thm]{Corollary}
\newtheorem{lem}[thm]{Lemma}
\newtheorem{exa}[thm]{Example}
\theoremstyle{definition}
\newtheorem{defn}{Definition}[section]
\newcommand{\scr}[1]{\mathscr #1}
\definecolor{wco}{rgb}{0.5,0.2,0.3}
\numberwithin{equation}{section} \theoremstyle{remark}
\newtheorem{rem}{Remark}[section]
\newcommand{\ua}{\uparrow}
\renewcommand{\hat}{\widehat}
\title{{\bf Almost Sure Asymptotic Stability for Regime-Switching Diffusions}
}
\author{Junhao Hu,\thanks{College of Mathematics and Statistics,
South-Central University for Nationalities, Wuhan, 430074, China,
junhaohu74@163.com} \and Jianhai Bao,\thanks{Department of
Mathematics, Central South University, Changsha, Hunan, 410083,
China, jianhaibao13@gmail.com} \and Chenggui Yuan\thanks{Department
of Mathematics, Swansea University, Singleton Park, SA2 8PP, UK,
C.Yuan@swansea.ac.uk}}
\begin{document}
\def\R{\mathbb R}  \def\ff{\frac} \def\ss{\sqrt} \def\B{\mathbf
B}
\def\N{\mathbb N} \def\kk{\kappa} \def\m{{\bf m}}
\def\dd{\delta} \def\DD{\Dd} \def\vv{\varepsilon} \def\rr{\rho}
\def\<{\langle} \def\>{\rangle} \def\GG{\Gamma}
  \def\nn{\nabla} \def\pp{\partial} \def\EE{\scr E}
\def\d{\text{\rm{d}}} \def\bb{\beta} \def\aa{\alpha} \def\D{\scr D}
  \def\si{\sigma} \def\ess{\text{\rm{ess}}}
\def\beg{\begin} \def\beq{\begin{equation}}  \def\F{\scr F}
\def\Ric{\text{\rm{Ric}}} \def\Hess{\text{\rm{Hess}}}
\def\e{\text{\rm{e}}} \def\ua{\underline a} \def\OO{\Omega}  \def\oo{\omega}
 \def\tt{\tilde} \def\Ric{\text{\rm{Ric}}}
\def\cut{\text{\rm{cut}}} \def\P{\mathbb P} \def\ifn{I_n(f^{\bigotimes n})}
\def\C{\scr C}      \def\aaa{\mathbf{r}}     \def\r{r}
\def\gap{\text{\rm{gap}}} \def\prr{\pi_{{\bf m},\varrho}}  \def\r{\mathbf r}
\def\Z{\mathbb Z} \def\vrr{\varrho} \def\l{\lambda}
\def\L{\scr L}\def\Tt{\tt} \def\TT{\tt}\def\II{\mathbb I}
\def\i{{\rm in}}\def\Sect{{\rm Sect}}\def\E{\mathbb E} \def\H{\mathbb H}
\def\M{\scr M}\def\Q{\mathbb Q} \def\texto{\text{o}} \def\LL{\Lambda}
\def\Rank{{\rm Rank}} \def\B{\scr B} \def\i{{\rm i}} \def\HR{\hat{\R}^d}
\def\to{\rightarrow}\def\l{\ell}\def\lf{\lfloor}\def\rf{\rfloor}
\def\8{\infty}\def\ee{\epsilon} \def\Y{\mathbb{Y}} \def\lf{\lfloor}
\def\rf{\rfloor}\def\3{\triangle} \def\O{\mathcal {O}}
\def\SS{\mathbb{S}}\def\ta{\theta}\def\h{\hat}

\def\la{\langle}\def\ra{\rangle}

\def\trace{\text{\rm{trace}}}
\renewcommand{\bar}{\overline}
\def\Y{\mathbb{Y}}
\renewcommand{\tilde}{\widetilde}
\date{}

\maketitle

\begin{abstract}
In this paper, we discuss long-time behavior of sample paths for a
wide range of regime-switching diffusions.  Firstly, almost
sure asymptotic stability is concerned  (i) for regime-switching
diffusions with finite state spaces by the Perron-Frobenius theorem,
and, with regard to the case of reversible Markov chain, via the
principal eigenvalue approach; (ii) for regime-switching diffusions
 with countable state spaces by means of a finite partition trick
and an M-Matrix theory. We then apply our theory to study the stabilization for  linear switching models. Several examples are given to demonstrate our theory.

\noindent
 {\bf AMS subject Classification:} 60H10; 93D15 \\
\noindent {\bf Keywords:} almost sure asymptotic stability,
averaging condition, Perron-Frobenius's theorem, principal
eigenvalue, M-matrix,
 \end{abstract}

\section{Introduction}
Stochastic stability of stochastic differential equations (SDEs) has
been well developed (see e.g. the monographs \cite{Kh12,M94,M07}.
For a regime-switching diffusion, we mean a diffusion process in a
random environment characterized by a Markov chain. Its state vector
is a pair $(X_t,\LL_t)$, where the continuous component $X_t$ is
referred to as the state, while the discrete component $\LL_t$ is
regarded as the mode. Regime-switching diffusions have been widely
applied in control problems, storage modeling, neutral activity,
biology and mathematical finance (see e.g. \cite{MY06,YZ}). In
particular, an important issue in the study of regime-switching
diffusions is concerned with stability. In the past three decades,
stochastic stability of regime switching diffusions has also
received great attention (see e.g. the monographs \cite{MY06,YZ}).
\cite[Exampe 5.45, p.223]{MY06} reveals that $X_t$ is stable
although some of the subsystems are not.  So, in most cases,
stability analysis for regime-switching processes may be markedly
differently different from that of SDEs with regime switching. So
far, the work on stability analysis for regime-switching processes
focus on moment exponential stability \cite{Kh07,M99,M00}, almost
sure exponent stability \cite{MSY,XY}, asymptotically stable in
probability \cite{Kh07,SX14,YZ}, stability in distribution
\cite{YM03,YMZ}, to name a few. For ergodic property, strong Feller,
 transience and recurrence for regime-switching diffusions,  we would like to  refer
 to \cite{BGM,SX,SJ,S14,SX14,Xi} and references therein.

\smallskip
For most existing results, condition  to guarantee stability are
irrespective of stationary distribution of Markov chain involved
(see e.g. \cite{M99,M00,MSY,XY,YM03,YMZ,YZ}).  Moreover, note that
the vast majority of stability analysis focus on regime-switching
diffusions with finite state spaces (see e.g.
\cite{M99,M00,MSY,XY,YM03,YMZ,YZ}). In this paper, {\it under some
new conditions} (see Theorems \ref{T-1}, \ref{M-1} and \ref{coun}
for more details), we shall discuss stability analysis of sample
paths for a class of regime-switching diffusions which may admit
infinite state spaces. The content of this paper is arranged as
follows. In Section \ref{sec3}, for regime-switching diffusions with
finite state spaces, under an ``averaging condition''we study almost
sure asymptotic stability of sample path (see Theorem \ref{T-1})  by
the Perron-Frobenius theorem. In particular,  Theorem \ref{T-1}
improves greatly some existing results (see e.g. \cite[Theorem
3.1]{ZHZ12} and \cite[Theorem 5.29, p.192]{MY06}). For more details,
please refer to Examples \ref{Ex1} and \ref{Ex2}. Section \ref{sec4}
is devoted to diffusion processes with reversible Markov chains. For
such special case, the principal eigenvalue approach is adopted to
deal with almost sure asymptotic stability (see Theorem \ref{M-1}),
where Theorem \ref{M-1} cannot be covered by Theorem \ref{T-1}
 as Example \ref{ex4.3} shows. Note that Theorem \ref{T-1} is dependent
 on the explicit formula of stationary distribution of Markov chain,
 and Theorem \ref{eigen} need  the principal eigenvalue to be
 attainable. Accordingly, Theorems \ref{T-1} and \ref{M-1} seem hard
 to be generalized to the counterpart with infinite
 state space. Nevertheless, for a regime-switching diffusion  with an infinite
 state space, in Section \ref{sec5}, by a finite
 partition trick and an M-matrix theory, we proceed to investigate
 almost sure asymptotic stability.

\section{Problem Setup}
For each integer $n>0$, let $(\R^n,\<\cdot,\cdot\>,|\cdot|)$ be the
$n$-dimensional  Euclidean space and $\R^n\otimes\R^m$  the totality
of all $n\times m$ matrices  endowed with the Frobenius norm
$\|\cdot\|$. Let $\{W_t\}_{t\ge0}$ be an $m$-dimensional Brownian
motion defined on the probability space $(\OO, \F, \P)$
 with a filtration $\{\F_t\}_{t\ge0}$ satisfying
the usual conditions (i.e., $\F_t=\F_{t+}:=\bigcap_{s>t}\F_s$ and
$\F_0$ contains all $\P$-null sets).   If $A$ is a vector or matrix,
its
transpose is denoted by $A^T$. Let ${\bf0} $  be a zero vector, for $\xi=(\xi_1,\cdots,\xi_n)^T\in\R^n$,
$\xi\gg{\bf0} (\mbox{resp. }  \xi\ll{\bf0})$  means
each component $\xi_i>0 (\mbox{resp. } \xi_i<0)$, $i=1,\cdots,n.$
$C^2(\R^n;\R_+)$ stands for the family of all nonnegative functions
$f:\R^n\times\R_+\mapsto\R_+$  which are continuously twice
differentiable.

\smallskip

In this paper, we focus on a regime-switching diffusion process
$(X_t,\LL_t)$, where $\{X_t\}_{t\ge0}$ satisfies an SDE on
$(\R^n,\<\cdot,\cdot\>,|\cdot|)$
\begin{equation}\label{eq1}
\d X_t=b(X_t, t,\LL_t)\d t+\si(X_t, t,\LL_t)\d W_t,~t>0,~
X_0=x_0,~\LL_0=i_0,
\end{equation}
and
 $\{\LL_t\}_{t\ge0}$, independent of the Brownian motion $\{W_t\}_{t\ge0}$, is a right-continuous Markov
 chain
defined on the probability space $(\OO, \F, \P)$ with the state
space $\mathbb{S}:=\{1,2,\cdots,N\}$ for some  $1\le N\le\8$ and the
transition rules specified by
\begin{equation}\label{love}
\P(\LL_{t+\delta}=j|\LL_t=i)=
\begin{cases}
q_{ij}\delta+o(\delta),~~~~~~~~i\neq j,\\
1+q_{ii}\delta+o(\delta),~~~i=j
\end{cases}
\end{equation}
for sufficiently small $\delta>0 $. Here $q_{ij} \ge0$ is the
transition rate from $i$ to $j$ if $i\neq j$ while
$
  q_{ii}=- \sum_{i\neq j} q_{ij}.
$ We assume that the $Q$-matrix $Q:=(q_{ij})_{N\times N}$ is
irreducible   so that the Markov chain $\{\LL_t\}_{t\ge0}$ has a
unique stationary distribution $\mu:=({\mu_1,\cdots,\mu_N})$ which
can be determined by solving the following linear equation $$\mu
Q={\bf0}$$ subject to $\mu_1+\cdots+\mu_N=1 ~\mbox{ and }~\mu_i>0,\,
i\in\mathbb{S}.$

\smallskip

It is well known that a continuous-time Markov chain
$\{\LL_t\}_{t\ge0}$ with generator $Q=(q_{ij})_{N\times N}$ can be
described in the following manner by using the Poisson random
measure. Let
\begin{equation*}
h(i,z)=\sum_{j\in\mathbb{S}\setminus i}(j-i){\bf
1}_{z\in\triangle_{ij}},
\end{equation*}
where $\triangle_{ij}$ are the consecutive (with respect to (w.r.t.)
the lexicographic order on $\mathbb{S}\times\mathbb{S}$ left-closed,
right-open intervals on $\R_+$, each having length $q_{ij}$, with
$\triangle_{12}=[0,q_{12})).$ Then
\begin{equation*}
\d \LL_t=\int_\R h(X_{t-},z)N(\d t,\d z), ~~~\LL_0=i_0,
\end{equation*}
where $N(\d t,\d z)$ is a Poisson random measure with intensity $\d
t\times\d z.$

\smallskip
Let $(X_t,\LL_t)$  be the regime-switching diffusion process
determined by \eqref{eq1} and \eqref{love}. For each fixed
environment $i\in\mathbb{S}$, the corresponding diffusion
$X_t^{(i)}$ is defined by
\begin{equation}\label{*}
\d X_t^{(i)}=b(X_t^{(i)}, t,i)\d t+\si(X_t^{(i)}, t,i)\d
W_t,~~~t>0,~~~X_0^{(i)}=x_0.
\end{equation}
Then, the infinitesimal generator associated with \eqref{*} is given
by
\begin{equation*}
\mathscr{L}^{(i)}_tV(x):=\<\nn
V(x),b(x,t,i)\>+\ff{1}{2}\mbox{trace}(\si^T(x,t,i)\nn^2V(x)\si(x,t,i)),~V\in
C^2(\R^n;\R_+),
\end{equation*}
where $\nn$ and $\nn^2$ stand for the gradient and Hessian operators
respectively.

\smallskip
 Throughout the paper, we assume that
\begin{enumerate}
\item[\textmd{({\bf H1})}] $b:\R^n\times\R_{+}\times\mathbb{S}\mapsto\R^n$ and
$\si:\R^n\times\R_{+}\times\mathbb{S}\mapsto\R^n\otimes\R^m$ satisfy
the local Lipschitz condition with respect to (w.r.t.) the first
variable, i.e., for each $ k>0$,
 there exists an $L_k>0$ such that
\begin{equation*}
|b(x,t,i)-b(y,t,i)|+\|\si(x,t,i)-\si(y,t,i)\|\le L_k|x-y|,~~~|x|
\vee |y| \le k,
\end{equation*}
for all $(t,i)\in \R_{+}\times \mathbb{S}$. Moreover,
\begin{equation*}
\sup_{t\ge0,i\in\mathbb{S}}\{|b(0,t,i)|+\|\si(0,t,i)\|\}<\8.
\end{equation*}

\item[\textmd{({\bf H2})}] For each fixed $i\in \mathbb{S}$,  there exists a function $V\in C^2(\R^n;\R_{+})$ and $\beta_i\in\R$ such that $\lim_{|x|\to\8}V(x)=\8, V(0)=0$  and
\begin{equation}\label{cond-1}
\mathscr{L}^{(i)}_tV(x)\le  \beta_i V(x), ~~~(x,t)\in
\R^n\times\R_{+}.
\end{equation}
\end{enumerate}

 Under ({\bf H1}) and
({\bf H2}), by the classical Khasminskii approach (see e.g.
\cite[Theorem 3.5, p.75]{Kh12}), \eqref{eq1} admits a unique strong
solution $\{X_t(x_0,i_0)\}_{t\ge0}$ to highlight the initial data
$X_0=x_0$ and $\LL_0=i_0$.

\begin{defn}
{\rm The solution of  \eqref{eq1} is said to be almost surely
asymptotically stable if, for all $x_0\in \R^n$ and
$i_0\in\mathbb{S}$,
\begin{equation*}
\P\left(\lim_{t\to \8} |X_t(x_0,i_0)|=0 \right)=1.
\end{equation*}
}
\end{defn}

The lemma (see e.g. \cite[Thorem 7, p.139]{LS})) below, which is
concerned with long-time behavior of  nonnegative semi-martingales,
plays a crucial role in the following  stability analysis of  sample
path for \eqref{eq1}.

\begin{lem}\label{S-C}
{\rm
 Let $\{A_1(t)\}_{t\ge0}$, $\{A_2(t)\}_{t\ge0}$ be two continuous adapted increasing processes with $A_1(0)=A_2(0)=0$
a.s.,  $M(t)$ a real-valued continuous local martingale with
$M(0)=0$ a.s., and    $\zeta$  a nonnegative $\F_0$-measurable
random variable such that $\E\zeta<\8$. Define
\begin{equation*}
X(t)=\zeta+A_1(t)-A_2(t)+M(t),~~~ t\ge0.
\end{equation*}
If $X(t)$ is nonnegative, then
\begin{equation*}
\left\{ \lim_{t\to \8}A_1(t)<\8 \right \} \subset  \left\{
\lim_{t\to \8}X(t)<\8 \right\}\bigcap\left \{ \lim_{t\to
\8}A_2(t)<\8 \right\} ~~ \mbox{a.s.},
\end{equation*}
where $C\subset D$ a.s. means $\P(C\cap D^c)=0$. In particular, if $
\lim_{t\to \8}A_1(t)<\8 $ a.s., then,
\begin{equation*}
\lim_{t\to\8}X(t)<\8, ~~\lim_{t\to\8}A_2(t)<\8,~~\mbox{ and
}~~-\8<\lim_{t\to\8}M(t)<\8,~~~\mbox{a.s.}
\end{equation*}
 }
\end{lem}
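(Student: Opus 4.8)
The plan is to reduce everything to the classical convergence theorem for nonnegative supermartingales by means of a two-fold localization. The starting observation is that the process
\[
U(t):=X(t)-A_1(t)=\zeta-A_2(t)+M(t)
\]
is a continuous local supermartingale, being the sum of the $\F_0$-measurable nonnegative constant $\zeta$, the nonincreasing process $-A_2(t)$, and the continuous local martingale $M(t)$; moreover $U(t)\ge -A_1(t)$ because $X(t)\ge0$. The point is that on the target event $\{\lim_{t\to\8}A_1(t)<\8\}$ the process $U$ is bounded below, so after stopping it becomes a genuine nonnegative supermartingale to which Doob's convergence theorem applies.

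First I would introduce the stopping times $\theta_n:=\inf\{t\ge0:A_1(t)>n\}$. Since $A_1$ is continuous and increasing, one has $A_1(t\wedge\theta_n)\le n$ for all $t$ and $\{\theta_n=\8\}=\{\lim_{t\to\8}A_1(t)\le n\}$, so that $\{\lim_{t\to\8}A_1(t)<\8\}=\bigcup_n\{\theta_n=\8\}$. The stopped process $U(t\wedge\theta_n)+n=\zeta+n-A_2(t\wedge\theta_n)+M(t\wedge\theta_n)$ is then nonnegative (using $X\ge0$ together with $A_1(t\wedge\theta_n)\le n$), and it is a nonnegative local supermartingale, hence a true supermartingale by Fatou's lemma along the localizing sequence. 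The supermartingale convergence theorem then yields that it converges almost surely to a finite limit, whence $U(t)$ itself converges almost surely on each $\{\theta_n=\8\}$, and therefore almost surely on $\{\lim_{t\to\8}A_1(t)<\8\}$.

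To control $A_2$ I would add a second localization. Let $\{\rho_m\}$ be a sequence reducing $M$ to a true martingale, and set $\tau_{n,m}:=\theta_n\wedge\rho_m$. Taking expectations in the nonnegative identity $0\le X(t\wedge\tau_{n,m})=\zeta+A_1(t\wedge\tau_{n,m})-A_2(t\wedge\tau_{n,m})+M(t\wedge\tau_{n,m})$ and using $\E M(t\wedge\tau_{n,m})=0$ together with $A_1(t\wedge\tau_{n,m})\le n$ gives $\E A_2(t\wedge\tau_{n,m})\le \E\zeta+n$. Monotone convergence as $t\to\8$ and then $m\to\8$ produces $\E A_2(\theta_n)\le\E\zeta+n<\8$, so $A_2(\theta_n)<\8$ almost surely; on $\{\theta_n=\8\}$ this reads $\lim_{t\to\8}A_2(t)<\8$, and taking the union over $n$ gives $\lim_{t\to\8}A_2(t)<\8$ almost surely on $\{\lim_{t\to\8}A_1(t)<\8\}$.

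Finally I would assemble the pieces on $\{\lim_{t\to\8}A_1(t)<\8\}$: since $U(t)$ and $A_2(t)$ both converge to finite limits there and $\zeta$ is finite, the representation $M(t)=U(t)-\zeta+A_2(t)$ forces $M(t)$ to converge to a finite limit, and then $X(t)=\zeta+A_1(t)-A_2(t)+M(t)$ converges to a finite limit as well. This establishes the claimed inclusion, and the ``in particular'' assertion is simply the special case in which $\{\lim_{t\to\8}A_1(t)<\8\}$ has full probability. The main obstacle I anticipate is the bookkeeping in the double localization, specifically verifying that the stopped process is genuinely nonnegative pathwise (so that both the passage from local supermartingale to supermartingale and the integrability bound on $A_2$ are legitimate) rather than nonnegative only in expectation.
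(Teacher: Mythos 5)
The paper offers no proof of Lemma \ref{S-C} at all: it is quoted from Liptser and Shiryayev \cite{LS} (Theorem 7, p.~139) and used as a black box, so the only comparison available is with that classical result. Your argument is correct and self-contained, and it essentially reconstructs the standard proof: the decomposition $U=X-A_1=\zeta-A_2+M$, the stopping times $\theta_n=\inf\{t\ge0:A_1(t)>n\}$ making $U(\cdot\wedge\theta_n)+n$ nonnegative, Doob's convergence theorem for nonnegative supermartingales on each $\{\theta_n=\infty\}$, the expectation bound $\mathbb{E}\,A_2(t\wedge\theta_n\wedge\rho_m)\le\mathbb{E}\,\zeta+n$ followed by monotone convergence in $t$ and then $m$, and the final assembly via $M=U-\zeta+A_2$ and $X=U+A_1$ on $\{\lim_{t\to\infty}A_1(t)<\infty\}=\bigcup_n\{\theta_n=\infty\}$ are all sound, and your proof buys the reader a complete argument where the paper only has a citation. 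The one step you should write out explicitly --- and it is exactly the obstacle you flag yourself --- is the passage from ``nonnegative local supermartingale'' to ``true supermartingale'': conditional Fatou requires the localized processes $U(\cdot\wedge\theta_n\wedge\rho_m)+n$ to be genuine supermartingales, which in turn requires $A_2(t\wedge\theta_n\wedge\rho_m)$ to be integrable, and that is not among the hypotheses. It is, however, automatic: nonnegativity of the stopped process gives the pointwise bound $A_2(t\wedge\theta_n\wedge\rho_m)\le\zeta+n+M(t\wedge\theta_n\wedge\rho_m)$, whose right-hand side is integrable because a stopped true martingale is again a true martingale; alternatively, one can enlarge the localizing sequence by $\inf\{t\ge0:A_2(t)>m\}$, along which $A_2$ is bounded. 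With that observation inserted, both the Fatou step and the bound $\mathbb{E}\,A_2(\theta_n)\le\mathbb{E}\,\zeta+n<\infty$ (hence $A_2(\theta_n)<\infty$ a.s., giving $\lim_{t\to\infty}A_2(t)<\infty$ a.s.\ on $\{\theta_n=\infty\}$) are fully justified, and the lemma follows, the ``in particular'' clause being the special case where the event $\{\lim_{t\to\infty}A_1(t)<\infty\}$ has full probability.
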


\section{Almost Sure Asymptotic Stability}\label{sec3}

Our main result in this section is stated as below. Throughout this section, we assume that the Markov state space is finite, i.e. $N<\8.$
\begin{thm}\label{T-1}
{\rm Let  assumptions  ({\bf H1}) and  ({\bf H2}) hold.
Suppose further that
\begin{equation}\label{cond-2}
\sum_{i=1}^N\mu_i\beta_i <0.
\end{equation}
Then,  the solution of \eqref{eq1} is  almost surely asymptotically
stable. }
\end{thm}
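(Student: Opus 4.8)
The plan is to follow the Lyapunov trajectory $V(X_t)$ and to convert the stationary-average condition \eqref{cond-2} into genuine decay of $V(X_t)$ by an integrating-factor argument tailored to Lemma \ref{S-C}. The key point is that even if some $\beta_i>0$, the ergodic average of $\beta_{\LL_t}$ equals $\sum_{i=1}^N\mu_i\beta_i<0$, so over long time windows the ``bad'' modes are outweighed by the ``good'' ones.

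First I would apply the generalized It\^o formula for the regime-switching diffusion $(X_t,\LL_t)$ to the function $V$. Since $V$ depends only on the spatial variable, the switching part of the joint generator (and the associated compensated martingale from the Poisson random measure) contributes nothing, because $\sum_{j}q_{\LL_t j}(V(x)-V(x))=0$. Hence
\begin{equation*}
\d V(X_t)=\mathscr{L}^{(\LL_t)}_t V(X_t)\,\d t+\d M_t,\qquad M_t:=\int_0^t\langle\nn V(X_s),\si(X_s,s,\LL_s)\,\d W_s\rangle .
\end{equation*}
Invoking ({\bf H2}), I set $A_t:=\int_0^t\big(\beta_{\LL_s}V(X_s)-\mathscr{L}^{(\LL_s)}_sV(X_s)\big)\,\d s$, which is a continuous nondecreasing adapted process with $A_0=0$, and rewrite the dynamics as
\begin{equation*}
\d V(X_t)=\beta_{\LL_t}V(X_t)\,\d t+\d M_t-\d A_t .
\end{equation*}

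Next I would eliminate the drift by the pathwise integrating factor $R_t:=\exp\!\big(-\int_0^t\beta_{\LL_s}\,\d s\big)$, which is continuous, adapted and strictly positive, with $\d R_t=-\beta_{\LL_t}R_t\,\d t$. The product rule then yields
\begin{equation*}
R_t V(X_t)=V(x_0)+\int_0^t R_s\,\d M_s-\int_0^t R_s\,\d A_s .
\end{equation*}
Here $\int_0^t R_s\,\d A_s$ is continuous and increasing, $\int_0^t R_s\,\d M_s$ is a continuous local martingale, and the left-hand side is nonnegative. Thus Lemma \ref{S-C} applies directly with the increasing process $A_1\equiv0$ (whose limit is trivially finite), giving that $R_t V(X_t)$ converges a.s.\ to a finite limit as $t\to\8$. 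Finally I would invoke the ergodic theorem for the irreducible finite Markov chain $\LL_t$, namely $\frac1t\int_0^t\beta_{\LL_s}\,\d s\to\sum_{i=1}^N\mu_i\beta_i<0$ a.s., so that $\int_0^t\beta_{\LL_s}\,\d s\to-\8$ and hence $R_t\to+\8$ a.s. Combining this with the finiteness of $\lim_t R_tV(X_t)$ forces $V(X_t)\to0$ a.s.

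The step I expect to require the most care is the passage from $V(X_t)\to0$ to $|X_t|\to0$: the semimartingale decomposition and the integrating-factor device are robust, but this last implication needs $V$ to be positive definite with $\inf_{|x|\ge\varepsilon}V(x)>0$ for every $\varepsilon>0$ (radial unboundedness alone only keeps $X_t$ in a compact set and lets it approach the full zero set of $V$). This is the natural Lyapunov requirement implicit in using a single common $V$ in ({\bf H2}), and I would state it explicitly before concluding. A secondary, more routine point is justifying the It\^o formula and the vanishing of the switching martingale, together with the well-definedness of the local-martingale integrals despite $R_t$ growing; these are handled by the continuity and adaptedness of the integrands and a standard localization.
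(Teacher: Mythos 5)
Your proof is correct, but it takes a genuinely different route from the paper's. The paper converts the averaging condition \eqref{cond-2} into a spectral condition: by Lemma \ref{P-F} and the Perron--Frobenius theorem, for small $p>0$ the matrix $Q_p=Q+p\,\mathrm{diag}(\beta_1,\dots,\beta_N)$ admits a positive eigenvector $\xi^{(p)}\gg{\bf 0}$ with $Q_p\xi^{(p)}=-\eta_p\xi^{(p)}\ll{\bf 0}$, and It\^o's formula is applied to the weighted functional $V^p(X_t)\xi^{(p)}_{\Lambda_t}$; Lemma \ref{S-C} then yields only $\lim_{t\to\infty}V(X_t)<\infty$ and $\int_0^\infty V^p(X_t)\,\mathrm{d}t<\infty$, and a separate stopping-time/contradiction argument (Appendix A, relying on the quantitative increment bound from ({\bf H1})) is needed to upgrade this to $V(X_t)\to 0$ a.s. You instead keep the unweighted functional $V(X_t)$, absorb the drift with the pathwise integrating factor $R_t=\exp\bigl(-\int_0^t\beta_{\Lambda_s}\,\mathrm{d}s\bigr)$, apply Lemma \ref{S-C} to $R_tV(X_t)$ with $A_1\equiv 0$, and then use the a.s. ergodic theorem for the irreducible finite chain to force $R_t\to\infty$, which gives $V(X_t)\to 0$ a.s. in one stroke --- no Perron--Frobenius, no power $p$, and no need for the Appendix A argument; it even yields the rate estimate $\limsup_{t\to\infty}\frac1t\log V(X_t)\le\sum_{i=1}^N\mu_i\beta_i$ a.s. What the paper's eigenvector scheme buys is generality: the same weighted-Lyapunov mechanism is reused verbatim for the reversible case (Theorem \ref{eigen}) and the countable case (Theorem \ref{coun}), where no law-of-large-numbers input is available and one works directly from a vector inequality $(Q+\mathrm{diag}(\beta))\xi\ll{\bf 0}$, whereas your argument is tied to the finite-state ergodic theorem and explicit knowledge of $\mu$. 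Finally, the positive-definiteness caveat you raise (needing $\inf_{\varepsilon\le |x|\le M}V(x)>0$ to pass from $V(X_t)\to0$ to $X_t\to0$) is real but is not a defect of your proof relative to the paper: ({\bf H2}) as stated only requires $V\ge 0$, $V(0)=0$ and radial unboundedness, and the paper's Appendix A uses the same unstated property when it asserts $V(z)>0$ for a limit point $z$ with $|z|\ge\varsigma$; you are right to make it explicit.
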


\begin{rem}
{\rm Equation \eqref{eq1} is said to be attractive ``in average'' if
\eqref{cond-2} holds (see e.g. \cite{BGM}). We call \eqref{cond-2}
an ``averaging condition''. On the other hand,  in Theorem
\ref{T-1}, it is worth to pointing out that, for each $i\in
\mathbb{S}$, $\bb_i$ need not to be negative.
 }
\end{rem}

Before the proof of  Theorem \ref{T-1}, let us recall Proposition
4.2 due to Bardet et al. \cite{BGM},    which is stated as below for
convenience.

\begin{lem}\label{P-F}
{\rm  For any $p>0$, let
\begin{equation}\label{r4}
\mbox{diag}(\bb):=\mbox{diag}(\bb_1,\cdots,\bb_N),~~~~ Q_p:=Q+
p\,\mbox{diag}(\bb),~~\eta_p:=-\max_{\gamma\in\mbox{spec}(
Q_p)}\mbox{Re}\gamma,
\end{equation}
where $Q$ is the $Q$-matrix of the Markov chain
$\{\LL(t)\}_{t\ge0},$ and $\mbox{spec}(Q_p)$ denotes the spectrum of
$Q_p.$ Under \eqref{cond-2}, one has
\begin{enumerate}
\item[(i)] $\eta_p>0$ if $\max_{i\in\mathbb{S}}\bb_i\le0;$

\item[(ii)] $\eta_p>0$ for  $p<\aa$, where $\aa\in(0,\min_{\bb_i>0,i\in\mathbb{S}}\{-q_{ii}/\bb_i\})$ with $\max_{i\in\mathbb{S}}\bb_i>0.$
\end{enumerate}
}
\end{lem}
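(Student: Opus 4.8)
The plan is to reduce both claims to the single statement that the \emph{spectral abscissa} $s(Q_p):=\max_{\gamma\in\mathrm{spec}(Q_p)}\mathrm{Re}\,\gamma$ of the essentially nonnegative (Metzler) matrix $Q_p=Q+p\,\mathrm{diag}(\beta)$ is strictly negative, since $\eta_p=-s(Q_p)$. Because $q_{ij}\ge0$ for $i\neq j$, the off-diagonal part of $Q_p$ is nonnegative, so $Q_p+cI$ is a nonnegative matrix for $c>0$ large, and irreducible since $Q$ is. By the Perron--Frobenius theorem applied to $Q_p+cI$, the number $s(Q_p)$ is a \emph{real, simple} eigenvalue of $Q_p$ carrying strictly positive right and left eigenvectors. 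I will use two facts about $s(\cdot)$ on irreducible Metzler matrices: strict monotonicity, i.e. $A\le A'$ entrywise with $A\neq A'$ forces $s(A)<s(A')$ (apply the strict Perron monotonicity to $A+cI\le A'+cI$), and analytic dependence of the simple top eigenvalue under perturbation.

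For part (i), assume $\max_i\beta_i\le0$. Then $p\,\mathrm{diag}(\beta)\le0$, so $Q_p\le Q$ entrywise with the off-diagonal entries, hence irreducibility, unchanged. Condition \eqref{cond-2} together with $\mu_i>0$ precludes $\beta\equiv0$, so some $\beta_i<0$ and thus $Q_p\neq Q$. Strict monotonicity gives $s(Q_p)<s(Q)$, and $s(Q)=0$ because $Q\mathbf{1}=\mathbf{0}$ exhibits $0$ as the top eigenvalue of the irreducible generator $Q$ (every other eigenvalue has negative real part). Hence $s(Q_p)<0$, i.e. $\eta_p>0$, for all $p>0$.

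For part (ii), where $\max_i\beta_i>0$, monotonicity is unavailable and I would argue perturbatively about $p=0$. There $s(Q_0)=0$ is simple with right eigenvector $\mathbf{1}$ and left eigenvector $\mu$, so first-order eigenvalue perturbation gives
\begin{equation*}
\frac{d}{dp}\,s(Q_p)\Big|_{p=0}=\frac{\mu\,\mathrm{diag}(\beta)\,\mathbf{1}}{\mu\,\mathbf{1}}=\sum_{i=1}^N\mu_i\beta_i<0,
\end{equation*}
the inequality being exactly \eqref{cond-2}. Since $s(Q_0)=0$ with negative right derivative, $s(Q_p)<0$ for all sufficiently small $p>0$, which is the asserted existence of a positive threshold $\alpha$. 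The ceiling $\min_{\beta_i>0}\{-q_{ii}/\beta_i\}$ enters as the largest $p$ for which every diagonal entry $q_{ii}+p\beta_i$ of $Q_p$ remains strictly negative, i.e. for which $-Q_p$ stays a $Z$-matrix with positive diagonal; within this regime the negativity of $s(Q_p)$ can be certified by a Collatz--Wielandt test vector $v\gg\mathbf{0}$ with $Q_pv\ll\mathbf{0}$ (pairing with the positive left eigenvector $u$ gives $s(Q_p)\,uv=u(Q_pv)<0$). Concretely, taking $v=\mathbf{1}+pw$ with $w$ solving $Qw=-\beta+(\sum_i\mu_i\beta_i)\mathbf{1}$, which is solvable since the right-hand side is $\mu$-orthogonal, one computes $Q_pv=p\bigl(\sum_i\mu_i\beta_i\bigr)\mathbf{1}+p^2\,\mathrm{diag}(\beta)w\ll\mathbf{0}$ for small $p$.

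The main obstacle lies entirely in part (ii): unlike in (i), one cannot have $s(Q_p)<0$ for all $p>0$, as a two-state computation shows the Perron root turns positive beyond a finite $p^\ast$. Thus \eqref{cond-2} only fixes the correct \emph{sign of the slope} at the origin, and the delicate step is quantifying how far the negativity persists — that is, verifying that the diagonal-negativity constraint $p<\min_{\beta_i>0}\{-q_{ii}/\beta_i\}$ is precisely what keeps the test vector $v\gg\mathbf{0}$ together with $Q_pv\ll\mathbf{0}$, thereby converting the infinitesimal estimate into the explicit range defining $\alpha$.
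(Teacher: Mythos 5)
The paper contains no proof of this lemma to compare against: it is quoted verbatim as Proposition 4.2 of Bardet, Gu\'erin and Malrieu \cite{BGM}, with a citation in place of an argument. Judged on its own, your proof is essentially correct and self-contained. The identification $\eta_p=-s(Q_p)$ with $s(Q_p)$ a real simple eigenvalue carrying positive eigenvectors (Perron--Frobenius applied to $Q_p+cI$) is sound; part (i) via strict monotonicity of the Perron root under $Q_p+cI\le Q+cI$, $Q_p\neq Q$ (some $\beta_i<0$ is indeed forced by \eqref{cond-2} since all $\mu_i>0$), together with $s(Q)=0$ from $Q\mathbf{1}=\mathbf{0}$, is a complete argument. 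For part (ii), the perturbation formula $\frac{d}{dp}s(Q_p)\big|_{p=0}=\sum_i\mu_i\beta_i<0$ at the simple eigenvalue $0$ (left eigenvector $\mu$, right eigenvector $\mathbf{1}$) gives $s(Q_p)<0$ for all small $p>0$, and your test-vector computation checks out: with $Qw=-\beta+(\sum_i\mu_i\beta_i)\mathbf{1}$ (solvable since the right side is annihilated by $\mu$, and $Q$ has rank $N-1$), one gets $Q_p(\mathbf{1}+pw)=p(\sum_i\mu_i\beta_i)\mathbf{1}+p^2\,\mathrm{diag}(\beta)w\ll\mathbf{0}$ for small $p$, and pairing with the positive left eigenvector certifies $s(Q_p)<0$. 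Since the lemma, as it is used in the proof of Theorem \ref{T-1}, only asserts the existence of some threshold $\alpha$ in $(0,m)$ with $m:=\min_{\beta_i>0}\{-q_{ii}/\beta_i\}$, this suffices; that any threshold must satisfy $\alpha\le m$ follows from your observation $s(Q_p)\ge\max_i(q_{ii}+p\beta_i)$, which is $\ge 0$ once $p\ge m$.

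One caveat about your closing paragraph: it overstates the role of $m$. The constraint $p<m$ is \emph{not} ``precisely what keeps'' a test vector $v\gg\mathbf{0}$ with $Q_pv\ll\mathbf{0}$ alive, and negativity of $s(Q_p)$ need not persist on all of $(0,m)$. Take $N=2$, $q_{12}=q_{21}=1$, $\beta=(1,-3)$, so $\mu=(\tfrac12,\tfrac12)$ and $\sum_i\mu_i\beta_i=-1<0$. Then $\det Q_p=p(2-3p)$ and $\mathrm{tr}\,Q_p<0$, so $s(Q_p)<0$ exactly for $p\in(0,\tfrac23)$, while $m=1$: for $p\in(\tfrac23,1)$ one has $p<m$ yet $s(Q_p)>0$, so no such test vector can exist there. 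Thus $m$ is only an a priori upper bound on the crossing point, not a certificate of negativity below it, and the ``delicate step'' you propose at the end would fail if pursued literally. Fortunately it is unnecessary: the correct reading of (ii) is existential (there is some $\alpha\in(0,m)$ such that $\eta_p>0$ for $p<\alpha$), which your small-$p$ argument already delivers, exactly in the form the paper invokes when it extracts $p_0>0$ with $\eta_p>0$ for $0<p<p_0$.
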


\smallskip

We are now in the position to complete the argument of Theorem
\ref{T-1}.

\smallskip

\noindent {\it Proof of Theorem \ref{T-1}.} Let
$Q_{p,t}:=\e^{tQ_p}$, where $Q_p$ is defined as in \eqref{r4}. Then
the spectral radius $\mbox{Ria}( Q_{p,t})$ of $Q_{p,t}$ equals to
$\e^{-\eta_p}$. Since all coefficients of $  Q_{p,t}$ are positive,
the Perron-Frobenius theorem (see e.g. \cite[p.6]{CM}) yields that
$-\eta_p$ is a simple eigenvalue of $Q_p.$ Moreover, note that the
eigenvalue of $Q_{p,t}$ corresponding to $\e^{-\eta_p}$ is also an
eigenvalue of $Q_p$ corresponding to $-\eta_p.$ The Perron-Frobenius
theorem (see e.g. \cite[p.6]{CM}) ensures that there exists an
eigenvector $\xi^{(p)}=(\xi_1^{(p)},\cdots,\xi_N^{(p)})\gg{\bf0}$ of
$Q_p$ corresponding to $-\eta_p.$ Now, by Lemma \ref{P-F} above,
there exists some $ p_0>0$ such that $\eta_p>0$ for any $0< p< p_0.$
In what follows, fix a $p$ with $0< p\le\min\{1, p_0\}$ and the
corresponding eigenvector $\xi^{(p)}\gg{\bf0}$. Then we obtain that
\begin{equation}\label{eq2}
Q_{p}\xi^{(p)}=-\eta_{ p}\xi^{(p)}\ll{\bf0}.
\end{equation}
For notation simplicity, in what follows,  we write
$\{X_t\}_{t\ge0}$ in lieu of $\{X_t(x_0,i_0)\}_{t\ge0}$. By the
It\^o formula, it follows that
\begin{equation*}
\begin{split}
V^p(X_t)\xi^{(p)}_{\LL_t}&=V^p(x_0)\xi^{(p)}_{i_0}+\int_0^t\{(\mathscr{L}^{(\LL_s)}_sV^p(X_s))\xi^{(p)}_{\LL_s}+V^p(X_s)(Q\xi^{(p)})(\LL_s)\}\d s\\
&\quad+\int_0^t\xi^{(p)}_{\LL_s}\<\nn V^p(X_s),\si(X_s,s,\LL_s)\d W_s\>\\
&\quad+\int_0^t\int_\R V^p(X_s)\{\xi^{(p)}_{i_0+h(\LL_s,z)}-\xi^{(p)}_{\LL_s}\}\tt N(\d s,\d z)\\
&=:J_1(t)+J_2(t)+J_3(t)+J_4(t),
\end{split}
\end{equation*}
where $(Q\xi^{(p)})(i), i\in\mathbb{S}$, denotes the $i$-th entry of
the vector $Q\xi^{(p)}$, and $\tt N(\d t,\d z):=N(\d t,\d z)-\d t\d
z$. Due to $p\in(0,1]$, observe from \eqref{cond-1} and
\eqref{cond-2} that
\begin{equation*}
\begin{split}
J_2(t)
&=\int_0^t\Big\{pV^{p-1}(X_s)(\mathscr{L}^{(\LL_s)}_sV(X_s))\xi^{(p)}_{\LL_s}+V^p(X_s)(Q\xi^{(p)})(\LL_s)\\
&\quad+\ff{p(p-1)}{2}V^{p-2}(X_s)|\si^T(X_s,s,\LL_s)\nn
V(X_s)|^2\Big\}\d s\\
&\le\int_0^t\{p\bb_{\LL_s}\xi^{(p)}_{\LL_s}+(Q\xi^{(p)})(\LL_s)\}V^p(X_s)\d s\\
&=\int_0^t(Q_p\xi^{(p)})(\LL_s)V^p(X_s)\d s\\
&=-\eta_p\int_0^t\xi^{(p)}_{\LL_s}V^p(X_s)\d s.
\end{split}
\end{equation*}
Hence, we arrive at
\begin{equation*}
\begin{split}
V^p(X_t)\xi^{(p)}_{\LL_t}&\le
V^p(x_0)\xi^{(p)}_{i_0}-\eta_p\int_0^t\xi^{(p)}_{\LL_s}V^p(X_s)\d
s+J_3(t)+J_4(t).
\end{split}
\end{equation*}
Note that $J_3(t)$ and $J_4(t)$ are local martingales. Set
\begin{equation*}
Y(t):=V^p(x)\xi^{(p)}_{i_0}-\eta_p\int_0^t\xi^{(p)}_{\LL_s}V^p(X_s)\d
s+J_3(t)+J_4(t).
\end{equation*}
It is easy to see that $V^p(X_t)\xi^{(p)}_{\LL_t}\le Y(t)$,
$\mbox{a.s.},$ and $Y(t)$ is a nonnegative semi-martingale. Thus, by
Lemma \ref{S-C}, we infer that
\begin{equation}\label{*2}
\lim_{t\to\8}V(X_t)<\8~~\mbox{a.s.}~~~~\mbox{ and
}~~~~\int_0^\8V^p(X_t)\d t<\8 ~~\mbox{a.s.}
\end{equation}
This,
 together with $\lim_{|x|\to\8}V(x)=\8,$  further yields that
\begin{equation*}
\sup_{0\le
t<\8}|X_t|<\8~~~\mbox{a.s.},
\end{equation*}
and,
\begin{equation*}
\liminf_{t\to\8}V(X_t)=0~~\mbox{a.s.}.
\end{equation*}
Moreover, we can also claim that there exists $\OO_0\subset\OO$ with
$\P(\OO_0)=1$ such that
\begin{equation}\label{*021}
\lim_{t\to\8}V(X_t(\oo))=0,~~~\oo\in\OO_0.
\end{equation}
Then, by a contraction argument, we derive that
\begin{equation}\label{*022}
\lim_{t\to \8}X_t(\oo)=0,~~~\oo\in\OO_0.
\end{equation}
The desired assertion is therefore complete. For more details on
argument of \eqref{*021} and \eqref{*022}, please refer to Appendix A.

\smallskip

In ({\bf H2}), taking $V(x)=|x|^2$, we derive the following
corollary.
\begin{cor}\label{cor-1}
{\rm Let  ({\bf H1}) hold.  Assume that, for each
$i\in\mathbb{S}$, there exists $\beta_i\in\R$ such that
\begin{equation*}
2\langle x, b(x,t,i)\rangle+\|\si(x,t,i)\|^2 \le  \beta_i |x|^2,
~~~\forall (x,t)\in \R^n\times\R_{+}.
\end{equation*}
Assume further that $$ \sum_{i=1}^N\mu_i\beta_i <0.$$ Then, the
solution of  \eqref{eq1} is almost surely asymptotically stable. }
\end{cor}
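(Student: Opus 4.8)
The plan is to deduce Corollary~\ref{cor-1} directly from Theorem~\ref{T-1} by exhibiting a concrete Lyapunov function that verifies hypothesis~({\bf H2}) with the very same constants $\beta_i$ that appear in the corollary's drift-diffusion inequality. Specifically, I would set $V(x)=|x|^2$ and show that, under this choice, the generator bound $\mathscr{L}^{(i)}_tV(x)\le\beta_i V(x)$ is \emph{exactly} the stated inequality $2\langle x,b(x,t,i)\rangle+\|\si(x,t,i)\|^2\le\beta_i|x|^2$. Once ({\bf H2}) is in force with these $\beta_i$, and since the averaging condition $\sum_{i=1}^N\mu_i\beta_i<0$ is assumed verbatim, Theorem~\ref{T-1} applies and yields almost sure asymptotic stability with no further work.

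\smallskip

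The one computation to carry out is the explicit form of $\mathscr{L}^{(i)}_t$ applied to $V(x)=|x|^2$. First I would record $\nn V(x)=2x$ and $\nn^2V(x)=2I_n$, where $I_n$ is the $n\times n$ identity. Substituting into the definition
\begin{equation*}
\mathscr{L}^{(i)}_tV(x)=\langle\nn V(x),b(x,t,i)\rangle+\ff12\trace\big(\si^T(x,t,i)\nn^2V(x)\si(x,t,i)\big)
\end{equation*}
gives $\langle 2x,b(x,t,i)\rangle+\ff12\trace\big(\si^T(x,t,i)\cdot 2I_n\cdot\si(x,t,i)\big)=2\langle x,b(x,t,i)\rangle+\trace\big(\si^T(x,t,i)\si(x,t,i)\big)$. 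The second term is precisely $\|\si(x,t,i)\|^2$ in the Frobenius norm, since $\|A\|^2=\trace(A^TA)$ by definition. Hence $\mathscr{L}^{(i)}_tV(x)=2\langle x,b(x,t,i)\rangle+\|\si(x,t,i)\|^2$, and the corollary's assumption says this is bounded above by $\beta_i|x|^2=\beta_i V(x)$, which is inequality~\eqref{cond-1}.

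\smallskip

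Two routine side-conditions of ({\bf H2}) remain to be checked for $V(x)=|x|^2$: it belongs to $C^2(\R^n;\R_+)$, satisfies $V(0)=0$, and is radially unbounded in the sense $\lim_{|x|\to\8}V(x)=\8$. All three are immediate for $|x|^2$. I would note that the same $V$ works simultaneously for every $i\in\mathbb{S}$, which is compatible with ({\bf H2}) as stated (the Lyapunov function may depend on $i$, and here it happens to be independent of $i$). I do not anticipate any genuine obstacle: the corollary is an instantiation of the theorem, and the only content is verifying that the quadratic Lyapunov function converts the coercivity-type bound on the coefficients into the abstract generator bound~\eqref{cond-1}. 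The cleanest exposition is therefore to state the identity $\mathscr{L}^{(i)}_t|x|^2=2\langle x,b(x,t,i)\rangle+\|\si(x,t,i)\|^2$, invoke the hypothesis to get \eqref{cond-1}, and then simply cite Theorem~\ref{T-1}.
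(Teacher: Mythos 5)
Your proposal is correct and is exactly the paper's argument: the paper obtains Corollary \ref{cor-1} from Theorem \ref{T-1} by taking $V(x)=|x|^2$ in ({\bf H2}), which is precisely your computation showing $\mathscr{L}^{(i)}_t|x|^2=2\langle x,b(x,t,i)\rangle+\|\si(x,t,i)\|^2$ so that the stated coefficient bound becomes \eqref{cond-1}. Your explicit verification of the side conditions ($V\in C^2(\R^n;\R_+)$, $V(0)=0$, radial unboundedness) only spells out what the paper leaves implicit.
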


We assume that
\begin{enumerate}
\item[\textmd{({\bf H$2^\prime$})}] For each $i\in\mathbb{S}$, there exist   $V\in C^2(\R^n;\R_{+})$ with compact level
sets, $\gamma\in L^1(\R_{+};\R_{+})$, and $\beta_i\in\R$ such that
\begin{equation}\label{cond-4}
 \mathscr{L}^{(i)}_tV(x)\le \gamma_t+\beta_i V(x), ~~~ (x,t)\in \R^n\times\R_{+}.
\end{equation}
\end{enumerate}

\begin{thm}\label{M-1}
{\rm  Let  ({\bf H1}) and ({\bf H$2^\prime$}) hold. Assume
further that
\begin{equation}\label{*3}
\min_{i\in\mathbb{S}}\{-q_{ii}/\beta_i: \beta_i>0\}>1,
\end{equation}
and
\begin{equation}\label{*4}
 \sum_{i=1}^N\mu_i\beta_i <0.
\end{equation}
Then, the solution of  \eqref{eq1} is almost surely asymptotically
stable. }
\end{thm}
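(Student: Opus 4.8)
The plan is to mimic the proof of Theorem~\ref{T-1}, the essential change being to apply It\^o's formula to $V(X_t)\xi_{\LL_t}$ itself, i.e. with exponent $p=1$, rather than to $V^p(X_t)\xi_{\LL_t}$ with $p\in(0,1)$. The reason is that ({\bf H$2^\prime$}) bounds $\mathscr{L}^{(i)}_tV$ only up to the inhomogeneous term $\gamma_t$; if one worked with $V^p$ for $p<1$, It\^o's formula would produce a contribution of order $V^{p-1}(X_s)\gamma_s$, and since $V^{p-1}$ blows up exactly where $V$ is small (near the equilibrium), this term need not be integrable along the trajectory. Taking $p=1$ removes the obstruction, for then the inhomogeneity enters linearly and the integrability $\gamma\in L^1(\R_+;\R_+)$ can be used directly. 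Condition \eqref{*3} is precisely what makes $p=1$ admissible in Lemma~\ref{P-F}: if $\max_{i\in\mathbb{S}}\beta_i\le0$ then part~(i) gives $\eta_1>0$, whereas if $\max_{i\in\mathbb{S}}\beta_i>0$ then \eqref{*3} permits a choice $\aa\in(1,\min_{\beta_i>0}\{-q_{ii}/\beta_i\})$, so that $p=1<\aa$ and part~(ii) again yields $\eta_1>0$.

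Accordingly, I would first set $Q_1:=Q+\mbox{diag}(\beta)$ and use the above to get $\eta_1>0$; the Perron--Frobenius theorem then supplies, exactly as in Theorem~\ref{T-1}, a strictly positive eigenvector $\xi^{(1)}\gg{\bf0}$ with $Q_1\xi^{(1)}=-\eta_1\xi^{(1)}\ll{\bf0}$. Applying It\^o's formula to $V(X_t)\xi^{(1)}_{\LL_t}$ produces a drift $(\mathscr{L}^{(\LL_s)}_sV(X_s))\xi^{(1)}_{\LL_s}+V(X_s)(Q\xi^{(1)})(\LL_s)$ together with two local martingales $J_3,J_4$ (the $\ff{p(p-1)}{2}$-term of Theorem~\ref{T-1} vanishes since $p=1$). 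Invoking \eqref{cond-4} and $\xi^{(1)}\gg{\bf0}$, the drift is dominated by
\begin{equation*}
\gamma_s\,\xi^{(1)}_{\LL_s}+\big(\beta_{\LL_s}\xi^{(1)}_{\LL_s}+(Q\xi^{(1)})(\LL_s)\big)V(X_s)
=\gamma_s\,\xi^{(1)}_{\LL_s}-\eta_1\,\xi^{(1)}_{\LL_s}V(X_s),
\end{equation*}
where I used $(Q_1\xi^{(1)})(\LL_s)=-\eta_1\xi^{(1)}_{\LL_s}$.

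The crucial new point, absent from Theorem~\ref{T-1}, is the treatment of the inhomogeneous term. Because $\mathbb{S}$ is finite, $\xi^{(1)}_{\LL_s}\le\max_{i}\xi^{(1)}_i<\8$, so the continuous adapted increasing process $A_1(t):=\int_0^t\gamma_s\xi^{(1)}_{\LL_s}\d s$ has $\lim_{t\to\8}A_1(t)\le(\max_i\xi^{(1)}_i)\int_0^\8\gamma_s\d s<\8$ a.s. Writing
\begin{equation*}
V(X_t)\xi^{(1)}_{\LL_t}\le V(x_0)\xi^{(1)}_{i_0}+A_1(t)-\eta_1\int_0^t\xi^{(1)}_{\LL_s}V(X_s)\d s+J_3(t)+J_4(t)=:Y(t),
\end{equation*}
one sees that $Y(t)\ge V(X_t)\xi^{(1)}_{\LL_t}\ge0$ is a nonnegative semi-martingale fitting Lemma~\ref{S-C} with $A_2(t):=\eta_1\int_0^t\xi^{(1)}_{\LL_s}V(X_s)\d s$ and $M=J_3+J_4$. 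Since $\lim_{t\to\8}A_1(t)<\8$ a.s., the ``in particular'' clause of Lemma~\ref{S-C} gives $\lim_{t\to\8}V(X_t)\xi^{(1)}_{\LL_t}<\8$ and $\lim_{t\to\8}A_2(t)<\8$ a.s.; as $\xi^{(1)}_{\LL_s}\ge\min_i\xi^{(1)}_i>0$, this delivers $\int_0^\8V(X_s)\d s<\8$ and $\sup_{t\ge0}V(X_t)<\8$ a.s.

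It remains to pass from these two conclusions to almost sure convergence of $X_t$. Since $V$ has compact level sets it is coercive, so $\sup_tV(X_t)<\8$ forces $\sup_t|X_t|<\8$, while $\int_0^\8V(X_s)\d s<\8$ forces $\liminf_{t\to\8}V(X_t)=0$ a.s. From here the upgrade to $\lim_{t\to\8}V(X_t)=0$ and then to $\lim_{t\to\8}X_t=0$ a.s. is identical to the concluding step of Theorem~\ref{T-1} (the contradiction argument of Appendix~A). I expect the two genuinely substantive steps to be the verification that $p=1$ is admissible (the only place \eqref{*3} enters, via Lemma~\ref{P-F}) and the a.s. finiteness of $A_1$ obtained from $\gamma\in L^1$; once these are in place, the semi-martingale and coercivity arguments are routine transcriptions of Theorem~\ref{T-1}.
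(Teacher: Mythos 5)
Your proposal is correct and takes essentially the same route as the paper's own proof: the paper likewise forms $Q_1=Q+\mathrm{diag}(\beta_1,\cdots,\beta_N)$, uses \eqref{*3} together with ``an argument of \eqref{eq2}'' (i.e.\ Lemma \ref{P-F} with $p=1$, exactly as you justify) to produce $\xi\gg\mathbf{0}$ and $\eta>0$ with $Q_1\xi=-\eta\xi\ll\mathbf{0}$, applies It\^o's formula to $V(X_t)\xi_{\Lambda_t}$, bounds the inhomogeneous contribution by $\max_{i\in\mathbb{S}}\xi_i\int_0^t\gamma_s\,\mathrm{d}s$, and then concludes via Lemma \ref{S-C} and the appendix argument just as in Theorem \ref{T-1}. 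Your write-up merely makes explicit two steps the paper leaves implicit, namely how \eqref{*3} renders $p=1$ admissible in Lemma \ref{P-F} and the verification of the hypotheses of Lemma \ref{S-C} from $\gamma\in L^1(\mathbb{R}_+;\mathbb{R}_+)$ and the finiteness of $\mathbb{S}$.
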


\begin{proof}
 Let
\begin{equation*}
Q_1:=Q+\mbox{diag}(\bb_1,\cdots,\bb_N),
\end{equation*}
where $\bb_i\in\R$ such that \eqref{*4}. Due to \eqref{*3}, upon
following an argument of \eqref{eq2}, there exist $\eta>0$ and a
vector $\xi=(\xi_1, \ldots, \xi_N)\gg {\bf0}$ such that
\begin{equation}\label{*5}
Q_1\xi=-\eta\xi\ll{\bf0}.
\end{equation}
By It\^o's formula, we derive from \eqref{cond-4} and \eqref{*5}
that
\begin{equation*}
\begin{split}
V(X_t)\xi_{\LL_t}&=V(x_0)\xi_{i_0}+\int_0^t\{(\mathscr{L}_s^{(\LL_s)}V(X_s))\xi_{\LL_s}+V(X_s)(Q\xi)(\LL_s)\}\d
s+\Lambda_1(t)+\Lambda_2(t)\\
&\le
V(x_0)\xi_{i_0}+\int_0^t\{\gamma_s\xi_{\LL_s}+(\bb_{\LL_s}\xi_{\LL_s}+(Q\xi)(\LL_s))V(X_s)\}\d
s+\Lambda_1(t)+\Lambda_2(t)\\
&=
V(x_0)\xi_{i_0}+\int_0^t\{\gamma_s\xi_{\LL_s}+(Q_1\xi)(\LL_s)V(X_s)\}\d
s+\Lambda_1(t)+\Lambda_2(t)\\
&\le V(x_0)\xi_{i_0}+\max_{i\in\mathbb{S}}\xi_i\int_0^t\gamma_s\d
s-\eta\int_0^tV(X_s)\xi_{\LL_s}\d s+\Lambda_1(t)+\Lambda_2(t),
\end{split}
\end{equation*}
where
\begin{equation*}
\Lambda_1(t):=\int_0^t\xi_{\LL_s}\<\nn V(X_t),\si(X_s,s,\LL_s)\d
W_s\>,~~\Lambda_2(t):=\int_0^t\int_\R
V(X_s)\{\xi_{i_0+h(\LL_s,z)}-\xi_{\LL_s}\}\tt N(\d s,\d z).
\end{equation*}
Note that $\gamma\in L^1(\R_{+};\R_{+})$. Thus, the desired
assertion follows by carrying out an argument of Theorem \ref{T-1}.
\end{proof}

\smallskip

In ({\bf H2'}), taking $V(x)=|x|^2$, we deduce the  corollary below.
\begin{cor}\label{cor-2}
{\rm Let  ({\bf H1}) hold.  Assume that, for each
$i\in\mathbb{S}$, there exists $\gamma\in L^1(\R_{+};\R_{+})$, and
$\beta_i\in\R$ such that
\begin{equation*}
2\langle x, b(x,t,i)\rangle+\|\si(x,t,i)\|^2 \le \gamma_t+ \beta_i
|x|^2, ~~~\forall (x,t)\in \R^n\times\R_{+}.
\end{equation*}
Assume further that $$ \sum_{i=1}^N\mu_i\beta_i <0.$$ Then, the
solution of  \eqref{eq1} is almost surely asymptotically stable. }
\end{cor}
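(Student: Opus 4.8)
The plan is to derive this corollary as an immediate specialization of Theorem \ref{M-1} to the quadratic Lyapunov function $V(x)=|x|^2$. First I would check that this choice meets the structural demands of (H2$'$): the map $x\mapsto|x|^2$ is smooth and nonnegative, vanishes only at the origin, and its level sets $\{x:|x|^2\le c\}$ are closed Euclidean balls and hence compact, so $V$ has compact level sets as required. The function $\gamma$ and the constants $\beta_i$ in (H2$'$) will be taken to be exactly those furnished by the hypothesis of the corollary.

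The computational core is to evaluate the generator $\mathscr{L}^{(i)}_t$ on $V$. Since $\nn V(x)=2x$ and $\nn^2V(x)=2I_n$, the definition of $\mathscr{L}^{(i)}_t$ gives
\begin{equation*}
\mathscr{L}^{(i)}_tV(x)=\langle 2x,b(x,t,i)\rangle+\ff{1}{2}\,\mbox{trace}\big(\si^T(x,t,i)\cdot 2I_n\cdot\si(x,t,i)\big)=2\langle x,b(x,t,i)\rangle+\|\si(x,t,i)\|^2,
\end{equation*}
where the last equality uses the identity $\mbox{trace}(A^TA)=\|A\|^2$ for the Frobenius norm. Consequently the structural bound assumed in the corollary reads precisely as $\mathscr{L}^{(i)}_tV(x)\le\gamma_t+\beta_i V(x)$, i.e. it is exactly condition \eqref{cond-4}. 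Thus (H2$'$) holds with $V(x)=|x|^2$, and I would note that $\gamma\in L^1(\R_+;\R_+)$ is inherited directly from the hypothesis.

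With (H1), (H2$'$), and the averaging condition \eqref{*4} now verified, Theorem \ref{M-1} delivers the almost sure asymptotic stability of the solution of \eqref{eq1}, and the proof is complete. The one point I would flag—and the only genuinely nontrivial issue—is that Theorem \ref{M-1} also requires the M-matrix type hypothesis \eqref{*3}, namely $\min_{i\in\mathbb{S}}\{-q_{ii}/\beta_i:\beta_i>0\}>1$, which is needed to produce the eigenpair $(\eta,\xi)$ with $Q_1\xi=-\eta\xi\ll{\bf0}$ in that theorem's proof. Since the Lyapunov function here is used directly rather than through a small power $V^p$ (the additive term $\gamma_t$ precludes the rescaling trick employed in Theorem \ref{T-1}), this condition cannot be dispensed with, and I would treat \eqref{*3} as a standing hypothesis in the statement of the corollary.
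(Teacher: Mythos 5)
Your proof is correct and is precisely the paper's own argument: the paper obtains Corollary \ref{cor-2} from Theorem \ref{M-1} by the same one-line specialization $V(x)=|x|^2$, and your generator computation $\mathscr{L}^{(i)}_tV(x)=2\langle x,b(x,t,i)\rangle+\|\si(x,t,i)\|^2$ is the entire content of that reduction.

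The point you flagged deserves emphasis, because it exposes a defect in the paper's statement rather than in your argument. Theorem \ref{M-1} genuinely requires \eqref{*3}: it is exactly what permits Lemma \ref{P-F}(ii) to be applied with $p=1$, producing $\eta>0$ and $\xi\gg{\bf0}$ with $Q_1\xi=-\eta\xi\ll{\bf0}$; and, as you observe, the additive term $\gamma_t$ in \eqref{cond-4} blocks the small-power trick $V^p$ by which Theorem \ref{T-1} avoids any hypothesis of this kind. The paper nevertheless drops \eqref{*3} from the statement of Corollary \ref{cor-2} without comment, so as written the corollary is not actually covered by Theorem \ref{M-1} except in the case $\max_{i\in\mathbb{S}}\beta_i\le0$, where Lemma \ref{P-F}(i) applies and \eqref{*3} holds vacuously. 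Your decision to carry \eqref{*3} as a standing hypothesis is the correct repair, and it is the version of the corollary that the paper's own proof supports.
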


In the sequel, we provide two examples to demonstrate applications
of our theory.

\begin{exa}\label{Ex1}
{\rm Let $\{W_t\}_{t\ge0}$ be a scalar Brownian motion. Consider a
regime-switching diffusion process $(X_t,\LL_t)$, where
$\{X_t\}_{t\ge0}$ obeys a scalar SDE
\begin{equation}\label{eq8}
\d X_t= b_{\LL_t}X_t \d t+\si_{\LL_t}X_t \d
W_t,
\end{equation}
with initial value $(X_0, \LL_0),$  and $\{\LL_t\}_{t\ge0}$, independent of $\{W_t\}_{t\ge0}$, is a
right-continuous Markovian chain taking values in
$\mathbb{S}=\{1,2\}$ with generator
$$
  Q= \left(
       \begin{array}{cc}
         -u & u \\
         v & -v \\
       \end{array}
     \right)
$$
for $u,v>0$.

\smallskip

In \eqref{eq8}, set $b_1=1,b_2=-2,\si_1=1,\si_2=1$. Note that the
solution of SDE
\begin{equation*}
\d X_t=X_t\d t+X_t\d W(t),~~t>0,~~~X_0=x_0
\end{equation*}
is unstable.  It is easy to see that the unique stationary
distribution of $\{\LL_t\}_{t\ge0}$ is
$(\mu_1,\mu_2)=(\frac{v}{u+v}, \frac{u}{u+v}) $. A straightforward
calculation gives that
\begin{equation*}
\mathscr{L}_t^{(1)}(|x|^2)=3|x|^2~~~\mbox{ and
}~~~\mathscr{L}_t^{(2)}(|x|^2)=-3|x|^2,~~~~x\in\R.
\end{equation*}
So $\beta_1=3, \beta_2=-3$.  Thus, by Corollary \ref{cor-2}, the
solution of \eqref{eq8} is almost surely asymptotically stable if
$0<v<u. $

\smallskip

However, in order for  the solution of \eqref{eq8}
is almost surely asymptotically stable,  the condition in  \cite[Theorem 3.1]{ZHZ12} is  $u>6$ and $v\in(0,3)$.
Therefore, the present Theorem \ref{T-1} improves some  existing results in
certain sense.}
\end{exa}

\begin{exa}\label{Ex2}
{\rm Let $\{W_t\}$ be a scalar Brownian motion. Consider a
regime-switching diffusion process $(X_t,\LL_t)$, in which
$\{X_t\}_{t\ge0}$ satisfies a scalar SDE
\begin{equation}\label{eq9}
\d X_t=b(X_t,t,\LL_t) \d t+\si(X_t,t,\LL_t) \d
W(t),
\end{equation}
with initial value $(X_0, \LL_0),$ and $\{\LL_t\}_{t\ge0}$,   independent of $\{W_t\}_{t\ge0}$,  is a
right-continuous Markovian chain taking values in
$\mathbb{S}:=\{0,1,2\}$ with the generator
\begin{equation}\label{*7}
Q= \left(\begin{array}{ccc}
  -(3+\nu) & \nu & 3\\
  1 & -3 & 2\\
  1 &2 &-3
  \end{array}
  \right)
  \end{equation}
for some $\nu\ge0.$ In \eqref{eq9}, for any $(x,t)\in\R\times\R_+,$
let
\begin{align*}
& b(x,t,0)=\ff{x}{4}, ~~~ ~~~~~~~~~~~~~~~~~~~~~~~\si(x,t,0)=(1+t)^{-1},\\
& b(x,t,1)=\sin x/(1+t) , ~~~ ~~~~~~~~~\si(x,t,1)=\ff{x}{2} ,\\
& b(x,t,2)= \e^{-t}-5x-2x^3, ~~~ ~~~~~~\si(x,t,2)= x \sin t.
\end{align*}
Note that the unique stationary distribution of $\{\LL_t\}_{t\ge0}$
is
\begin{equation*}
\mu=(\mu_0,\mu_1,\mu_2)=\Big(\ff{5}{20+5\nu},\ff{6+3\nu}{20+5\nu},\ff{9+2\nu}{20+5\nu}\Big).
\end{equation*}
Next, by the fundamental inequality:
$2ab\le\vv a^2+b^2/\vv$ for $a,b\in\R,\vv>0$, it follows that
\begin{equation*}
\begin{split}
 \mathscr{L}^{(0)}_t(|x|^2)=\ff{1}{(1+t)^2}+\ff{|x|^2}{2},~\mathscr{L}^{(1)}_t(|x|^2)\le \ff{4}{(1+t)^2}+\ff{|x|^2}{2},
 \end{split}
\end{equation*}
and
\begin{equation*}
\begin{split}
\mathscr{L}_t^{(2)}(|x|^2)\le\e^{-2t}-8|x|^2.
\end{split}
\end{equation*}
Hence, $\beta_0=\beta_1=\ff{1}{2}$, $\bb_2=-8$, and $\gamma_t=4
(1+t)^{-2}+\e^{-2t}$. For \eqref{eq9}, it is trivial to see that
\eqref{*3} and \eqref{*4} hold respectively. By Theorem \ref{M-1},
the solution of Eq.\eqref{eq9} is almost surely asymptotically
stable.

\smallskip

 Whereas, observe  that $b(0,t,2)=\e^{-t}\neq0$ and
$b(x,t,2)$
 don't satisfy
the linear growth condition so that \cite[Theorem 5.29, p.192]{MY06}
does not apply to \eqref{eq9}. }
\end{exa}

\section{Almost Sure Asymptotic Stability: Reversible
Case}\label{sec4} In the last section, we investigate almost sure
asymptotic stability for the regime-switching diffusion process
determined by \eqref{eq1} and \eqref{*}, where the Markov chain
$\{\LL_t\}_{t\ge0}$ need not to be reversible, i.e.,
$\pi_iq_{ij}=\pi_jq_{ji},i,j\in\mathbb{S}$, for some probability
measure $\pi:=(\pi_1,\cdots,\pi_N)$. While,  if $\{\LL_t\}_{t\ge0} $
with finite state space, i.e., $N<\8$,  is reversible, under another
new condition the long-time behavior of sample path for \eqref{eq1}
can also be discussed as Theorem \ref{eigen} below shows.

\smallskip

To begin with, we need to introduce some notation. Throughout this
section, we always assume that $N<\8$.
 Let
\begin{equation*}
L^2(\pi):=\Big\{f\in\B(\mathbb{S}):\sum_{i=1}^N\pi_if_i^2<\8\Big\}.
\end{equation*}
Then $(L^2(\pi),\<\cdot,\cdot\>_0,\|\cdot\|_0)$ is a Hilbert space
with the inner product $\<f,g\>_0:=\sum_{i=1}^N\pi_if_ig_i,f,g\in
L^2(\pi)$. Define the bilinear form $(D(f),\mathscr{D}(D))$ as
\begin{equation*}
D(f):=\ff{1}{2}\sum_{i,j=1}^N\pi_iq_{ij}(f_j-f_i)^2-\sum_{i=1}^N\pi_i\bb_if_i^2,~~~f\in
L^2(\pi),
\end{equation*}
where $\bb_i\in\R,i\in\mathbb{S}$, is given in ({\bf H$2^\prime$}),
and the domain
\begin{equation*}
\mathscr{D}(D):=\{f\in L^2(\pi):D(f)<\8\}.
\end{equation*}
The principal eigenvalue $\lambda_0$ of $D(f)$ is defined by
\begin{equation*}
\lambda_0:=\inf\{D(f):f\in\mathscr{D}(D),\|f\|_0=1\}.
\end{equation*}
For more details on the first eigenvalue, refer to \cite[Chapter
3]{Ch}. Due to the fact that the state space of $\{\LL_t\}_{t\ge0}$
is finite, there exists $\xi=(\xi_1,\cdots,\xi_N)\in\mathscr{D}(D)$
such that
\begin{equation}\label{***}
D(\xi)=\lambda_0\|\xi\|^2_0.
\end{equation}
 Define the operator
\begin{equation*}
\bar\OO :=Q+\mbox{diag}(\bb_1,\cdots,\bb_N),
\end{equation*}
where $Q$ is the $Q$-matrix of $\{\LL_t\}_{t\ge0},$ and $\bb_i\in\R$
such that ({\bf H$2^\prime$}).

\smallskip

The main result in this section is the following.

\begin{thm}\label{eigen}
{\rm Let  ({\bf H1}) and ({\bf H$2^\prime$})  hold, and assume
further $\lambda_0>0$.  Then, the solution of  \eqref{eq1} is almost
surely asymptotically stable.
  }
\end{thm}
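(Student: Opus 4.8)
The plan is to reduce Theorem~\ref{eigen} to the It\^o/semimartingale argument already used for Theorem~\ref{M-1}; the only genuinely new step is to manufacture, out of the spectral hypothesis $\lambda_0>0$, a strictly positive vector $\xi\gg{\bf0}$ satisfying $\bar\OO\xi\ll{\bf0}$, i.e. the exact analogue of \eqref{*5} with $\eta=\lambda_0$. Once such a $\xi$ is available, reversibility is no longer needed and the remainder is a transcription of the finite-state proof.

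First I would use reversibility to identify the Dirichlet form $D$ with the operator $\bar\OO$. Expanding the jump term and invoking $\pi_iq_{ij}=\pi_jq_{ji}$ together with $\sum_jq_{ij}=0$ gives the standard identity
\[
\ff12\sum_{i,j=1}^N\pi_iq_{ij}(f_j-f_i)^2=\<-Qf,f\>_0,\qquad f\in L^2(\pi),
\]
so that $D(f)=\<-\bar\OO f,f\>_0$ and $\bar\OO$ is self-adjoint on $(L^2(\pi),\<\cdot,\cdot\>_0)$. Hence $\lambda_0=\inf_{\|f\|_0=1}\<-\bar\OO f,f\>_0$ is precisely the smallest eigenvalue of $-\bar\OO$, equivalently $-\lambda_0$ is the largest eigenvalue of $\bar\OO$, and the minimizer $\xi$ of \eqref{***} satisfies the eigen-relation $\bar\OO\xi=-\lambda_0\xi$.

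Next I would show $\xi\gg{\bf0}$ by Perron--Frobenius. Since $\mbox{diag}(\bb_1,\cdots,\bb_N)$ alters only the diagonal, $\bar\OO$ inherits both the nonnegative off-diagonal entries and the irreducibility of $Q$; therefore, for $c>0$ large, $\bar\OO+cI$ is a nonnegative irreducible matrix whose Perron eigenvalue $-\lambda_0+c$ is simple and carries a strictly positive eigenvector. That this eigenvector coincides (up to scaling and sign) with the minimizer $\xi$ follows from $D(|f|)\le D(f)$---a consequence of $\big||f_j|-|f_i|\big|\le|f_j-f_i|$ and the fact that the zeroth-order term depends only on $f_i^2$---so $|\xi|$ is again a minimizer and, by simplicity of the top eigenspace, $\xi$ is of one sign and may be normalized so that $\xi\gg{\bf0}$. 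Combining this with $\lambda_0>0$ yields
\[
\bar\OO\xi=-\lambda_0\xi\ll{\bf0},
\]
which is exactly \eqref{*5} with $\eta=\lambda_0$.

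With this $\xi$ in hand I would repeat the computation of Theorem~\ref{M-1} verbatim: applying It\^o's formula to $V(X_t)\xi_{\LL_t}$ and using ({\bf H$2^\prime$}) with the eigen-relation gives
\[
V(X_t)\xi_{\LL_t}\le V(x_0)\xi_{i_0}+\max_{i\in\mathbb{S}}\xi_i\int_0^t\gamma_s\,\d s-\lambda_0\int_0^tV(X_s)\xi_{\LL_s}\,\d s+\Lambda_1(t)+\Lambda_2(t),
\]
where $\Lambda_1,\Lambda_2$ are local martingales. Since $\gamma\in L^1(\R_+;\R_+)$ and $\min_{i}\xi_i>0$, Lemma~\ref{S-C} gives $\lim_{t\to\8}V(X_t)<\8$ and $\int_0^\8 V(X_t)\,\d t<\8$ a.s., whence $\liminf_{t\to\8}V(X_t)=0$ a.s.; the compact level sets of $V$ and the contradiction argument of Appendix~A then upgrade this to $\lim_{t\to\8}V(X_t)=0$ and $\lim_{t\to\8}X_t=0$ a.s. I expect the only real obstacle to be the strict positivity of $\xi$: the martingale argument needs the weights $\xi_{\LL_s}$ bounded below by a positive constant, and securing this rests squarely on pairing the self-adjointness furnished by reversibility with the Perron--Frobenius structure of $\bar\OO$; everything downstream merely reproduces Theorem~\ref{M-1}.
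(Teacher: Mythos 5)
Your proposal is correct and follows essentially the same route as the paper: the paper's proof consists precisely of asserting $\xi\gg{\bf0}$ and $(Q\xi)(i)+\bb_i\xi_i=-\lambda_0\xi_i$ for the minimizer in \eqref{***} (citing the argument of \cite[Theorem 3.2]{SX}) and then running the semimartingale argument of Theorems \ref{T-1} and \ref{M-1}. The only difference is that you prove these two spectral facts yourself---via the reversibility identity $D(f)=\<-\bar\OO f,f\>_0$, the Rayleigh-quotient characterization of $\lambda_0$, and Perron--Frobenius applied to $\bar\OO+cI$ together with $D(|f|)\le D(f)$---which is a sound, self-contained replacement for the paper's citation.
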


\begin{proof}
  Recalling
\eqref{***} and checking  the argument of \cite[Theorem 3.2]{SX},
one has
\begin{equation*}
\xi\gg{\bf0} ~~\mbox{ and
}~~(Q\xi)(i)+\bb_i\xi_i=-\lambda_0\xi_i,~i\in\mathbb{S}.
\end{equation*}
Then, we complete the proof by carrying out an  argument of Theorem
\ref{T-1}.
\end{proof}

Next, an example is constructed to show Theorem \ref{eigen}.

\begin{exa}\label{ex4.3}
{\rm  Let $\{W_t\}$ be a scalar Brownian motion. Consider a
regime-switching diffusion process $(X_t,\LL_t)$, in which
$\{X_t\}_{t\ge0}$ satisfies a scalar SDE
\begin{equation}\label{*6}
\d X_t=b(X_t,t,\LL_t) \d t+\si(X_t,t,\LL_t) \d
W(t),~~X_0=x_0,~~\LL_0=0,
\end{equation}
and $\{\LL_t\}_{t\ge0}$,   independent of $\{W_t\}_{t\ge0}$,  is a
right-continuous Markovian chain taking values in
$\mathbb{S}:=\{0,1,2\}$
\begin{equation*}
Q= \left(\begin{array}{ccc}
  -b & b &0\\
  2a & -2(a+b) & 2b\\
  0 & 3a &-3a
  \end{array}
  \right)
  \end{equation*}
for  $a,b>0$ such that
\begin{equation}\label{*8}
b\in(0,1/4)~~~~\mbox{ and }~~~~a-b>1.
\end{equation}
In \eqref{*6}, for any $(x,t)\in\R\times\R_+,$ let
\begin{align*}
& b(x,t,0)=-x/8, ~~~ ~~~~~~~~~~~~~~~~~~\si(x,t,0)=(1+t)^{-1},\\
& b(x,t,1)=\sin x/(1+t) , ~~~ ~~~~~~~~~\si(x,t,1)=x,\\
& b(x,t,2)= \e^{-t}-x/6-2x^3, ~~~ ~~~~\si(x,t,2)= (x \sin t)/2.
\end{align*}
Hence, one can take $\beta_0=-1/4, \beta_1=1+\vv$, $\bb_2=-1/3+\dd$
for sufficiently small $\vv,\dd>0$, and $\gamma_t=c_{\vv,\dd}
((1+t)^{-2}+\e^{-2t})$ for some $c_{\vv,\dd}>0$. Moreover, by the
notion of $\OO$, for $\xi_i=i+1$, $i=0,1,2,$ we deduce that
\begin{equation*}
\begin{split}
(\bar \OO\xi)(0)=-(-b-\bb_0)\xi_0, ~~ (\bar \OO\xi)(1)=-(a-b-\bb_1)\xi_1,~~
(\bar \OO\xi)(2)=-(a-\bb_2)\xi_2.
\end{split}
\end{equation*}
Due to \eqref{*8}, we can chose $\vv,\dd>0$ sufficiently small such
that
\begin{equation*}
\lambda=\min\{-b-\bb_0,a-b-\bb_1,a-\bb_2\}>0.
\end{equation*}
Thus, one finds that
\begin{equation*}
(\bar \OO\xi)(i)\le-\lambda\xi_i,~~~i=0,1,2.
\end{equation*}
Then $\lambda_0>0$ due to \cite[Theorem 4.4]{SX}. As a result, by
Theorem \ref{M-1}, the solution of  \eqref{*6} is almost surely
asymptotically stable.

\smallskip
Note that the unique stationary distribution of $\{\LL_t\}_{t\ge0}$
is $ \mu=(1/3,1/3,1/3), $ and it is easy to see that  the averaging
condition \eqref{*4} does not  hold no matter how small $\vv,\dd>0$
is. Hence Corrolary \ref{cor-2} does not apply to \eqref{*4}.
Consequently,  Theorem
\ref{eigen} has its own right. }
\end{exa}

\section{Almost Sure Asymptotic Stability: Countable State
Space}\label{sec5} For the case of finite state space (i.e. $N<\8$),
we adopt the Perron-Frobenius theorem and the principal eigenvalue
approach to study almost sure asymptotic stability for
regime-switching diffusion process determined by \eqref{eq1} and
\eqref{love}, respectively. With regard to the first method, the
averaging condition (see \eqref{cond-2}) plays an important role in
the stability analysis. Therefore, one has to provide an explicit
formula of stationary distribution for an irreducible Markov chain
to guarantee the averaging condition to hold under some appropriate
conditions. So this approach seems hard to be generalized to the
case of infinite state space (i.e. $N=\8$) since the explicit
expression of stationary distribution is hard to be obtained. On the
other hand, the principal  eigenvalue approach can also be extend to
the case of infinite state space, however, under an additional
condition that $\lambda_0$ is attainable, i.e., there exists $f\in
L^2(\pi),f\neq0$, such that $D(f)=\lambda_0\|f\|_0^2.$ In this
section, for the case of infinite state space,  by a finite
partition approach and an $M$-matrix theory, we proceed to discuss
almost sure asymptotic stability for regime-switching diffusion
process determined by \eqref{eq1} and \eqref{love}.

\begin{defn}
{\rm (see e.g. \cite[Definition 2.9, p.67]{MY06}) A square matrix
$A=(a_{ij})_{n\times n}$ is called a nonsingular $M$-matrix if $A$
can be expressed in the form $A=sI-B$ with $B\gg{\bf0}$ and
$s>\mbox{Ria}(B)$, where $I$ is the $n\times n$ identity matrix and
$\mbox{Ria}(B)$ the spectral radius of $B.$ }

\end{defn}

We further suppose that
\begin{equation}\label{r3}
K:=\sup_{i\in\mathbb{S}}\bb_i<\8~~~~\mbox{ and }~~~~
\sup_{i\in\mathbb{S}}(-q_{ii})<\8.
\end{equation}
Let us insert $m$ points in the interval $(-\8,K]$ as follows:
\begin{equation*}
-\8=:k_0<k_1<\cdots<k_m<k_{m+1}:=K.
\end{equation*}
Then, the interval $(-\8,K]$ is divided into $m+1$ sub-intervals
$(k_{i-1},k_i]$  indexed by $i$. Let
\begin{equation*}
F_i:=\{j\in\mathbb{S}:\bb_j\in(k_{i-1},k_i]\},~~i=1,\cdots,m+1.
\end{equation*}
Without loss of generality, we can and do assume that each $F_i$ is
not empty. Then
\begin{equation*}
F:=\{F_1,\cdots,F_{m+1}\}
\end{equation*}
is a finite partition of $\mathbb{S}$. For $i,j=1,\cdots,m+1$, set
\begin{equation*}
q_{ij}^F:=
\begin{cases}
\sup_{r\in F_i}\sum_{k\in
F_j}q_{rk},~~~~~~~~j<i,\\
\inf_{r\in F_i}\sum_{k\in F_j}q_{rk},~~~~~~~~~j>i,\\
-\sum_{j\neq i}q_{ij}^F,~~~~~~~~~~~~~~~~~i=j.
\end{cases}
\end{equation*}
So $Q^F:=(q_{ij}^F)$ is the  $Q$-matrix for some Markov chain with
the state space $\mathbb{S}_0:=\{1,\cdots,m+1\}.$ For
$i=1,\cdots,m+1$, let
\begin{equation*}
\bb_i^F:=\sup_{j\in F_i}\bb_j,~~~~~~~~H_{m+1}:=
\left(\begin{array}{ccccc}
  1 & 1 &1 & \cdots & 1\\
  0 & 1 &1 &   \cdots &1\\
  \vdots & \vdots & \vdots & \cdots & \vdots\\
  0 & 0 & 0 & \cdots & 1\\
  \end{array}
  \right)_{(m+1)\times(m+1)}.
  \end{equation*}

\begin{thm}\label{coun}
{\rm Let $N=\8$, and assume that ({\bf H1}), ({\bf H$2^\prime$}) and
\eqref{r3} hold. Assume further that
\begin{equation*}
-(Q^F+\mbox{diag}(\bb_1^F,\cdots,\bb_{m+1}^F))H_{m+1}
\end{equation*}
is a nonsingular $M$-matrix.  Then, the solution of  \eqref{eq1} is
almost surely asymptotically stable.

}
\end{thm}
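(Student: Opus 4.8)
The plan is to reduce the countable-state problem to the very same semimartingale argument that proves Theorem \ref{M-1}, the only genuinely new ingredient being the construction of a positive, \emph{bounded} weight vector $\xi=(\xi_j)_{j\in\mathbb{S}}$ on the full state space satisfying a drift inequality of the form $(Q\xi)(j)+\bb_j\xi_j\le-\eta\,\xi_j$ for all $j\in\mathbb{S}$ and some $\eta>0$. Once such a $\xi$ is in hand, one applies It\^o's formula to $V(X_t)\xi_{\LL_t}$ exactly as in the proof of Theorem \ref{M-1}: the generator term is controlled by ({\bf H$2^\prime$}) and the drift inequality, the $\gamma$-term is integrable because $\gamma\in L^1(\R_+;\R_+)$ and $\xi$ is bounded above, and Lemma \ref{S-C} then yields $\lim_{t\to\8}V(X_t)<\8$ a.s.\ together with $\int_0^\8 V(X_t)\,\d t<\8$ a.s.\ (here using that $\xi$ is bounded below away from $0$). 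The compact-level-set property of $V$ in ({\bf H$2^\prime$}) and the argument of Appendix A then give $\lim_{t\to\8}X_t=0$ a.s.

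To build $\xi$ I would first exploit the $M$-matrix hypothesis. Writing $A:=-(Q^F+\mbox{diag}(\bb_1^F,\cdots,\bb_{m+1}^F))H_{m+1}$, the fact that $A$ is a nonsingular $M$-matrix (see \cite[Definition 2.9, p.67]{MY06} and the attendant equivalent characterizations) furnishes a vector $\theta\gg{\bf0}$ with $A\theta\gg{\bf0}$. Set $\xi^F:=H_{m+1}\theta$. Since $H_{m+1}$ is upper-triangular with unit entries, $\xi^F_i=\sum_{l\ge i}\theta_l$, so $\xi^F$ is strictly positive and \emph{nonincreasing} in $i$; this monotonicity is precisely what $H_{m+1}$ is engineered to produce. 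Moreover $(Q^F+\mbox{diag}(\bb^F))\xi^F=-A\theta\ll{\bf0}$, so there is $\eta>0$ with $(Q^F\xi^F)(i)+\bb_i^F\xi_i^F\le-\eta\,\xi_i^F$ for each $i=1,\dots,m+1$. Finally I lift $\xi^F$ to the full space by $\xi_j:=\xi_i^F$ for $j\in F_i$; since $\xi$ takes only the finitely many positive values $\xi_1^F,\dots,\xi_{m+1}^F$, it is bounded above and bounded below away from $0$.

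The crux — and the step I expect to be the main obstacle — is transferring the finite-state inequality to the full-space drift inequality $(Q\xi)(j)+\bb_j\xi_j\le-\eta\,\xi_j$ for every $j\in\mathbb{S}$. Fix $j\in F_i$ and write $a_{jl}:=\sum_{k\in F_l}q_{jk}$. Using $\sum_{k}q_{jk}=0$ one rewrites $(Q\xi)(j)=\sum_{l\ne i}(\xi_l^F-\xi_i^F)a_{jl}$ and likewise $(Q^F\xi^F)(i)=\sum_{l\ne i}(\xi_l^F-\xi_i^F)q_{il}^F$, whence
\[
(Q\xi)(j)-(Q^F\xi^F)(i)=\sum_{l\ne i}(\xi_l^F-\xi_i^F)\,(a_{jl}-q_{il}^F).
\]
The definition of $Q^F$ is matched to the monotonicity of $\xi^F$: for $l<i$ one has $\xi_l^F-\xi_i^F\ge0$ and $a_{jl}-q_{il}^F=a_{jl}-\sup_{r\in F_i}a_{rl}\le0$, while for $l>i$ one has $\xi_l^F-\xi_i^F\le0$ and $a_{jl}-q_{il}^F=a_{jl}-\inf_{r\in F_i}a_{rl}\ge0$; in either case every summand is $\le0$, so $(Q\xi)(j)\le(Q^F\xi^F)(i)$. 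Combining this with $\bb_j\le\bb_i^F$ (by definition of $\bb_i^F$) and $\xi_j=\xi_i^F>0$ gives $(Q\xi)(j)+\bb_j\xi_j\le(Q^F\xi^F)(i)+\bb_i^F\xi_i^F\le-\eta\,\xi_j$, as required. Here condition \eqref{r3} enters twice: $\sup_i\bb_i<\8$ makes the partition of $(-\8,K]$ meaningful, while $\sup_i(-q_{ii})<\8$ together with the boundedness of $\xi$ guarantees that $(Q\xi)(j)=\sum_k q_{jk}\xi_k$ converges absolutely, so that the rearrangements above are legitimate for the countable sum. With the drift inequality established, the argument of Theorem \ref{M-1} applies verbatim and the conclusion follows.
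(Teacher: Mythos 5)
Your proposal is correct and follows essentially the same route as the paper's proof: the nonsingular $M$-matrix hypothesis yields a positive vector $\theta$ (the paper's $\eta^F$) with $(Q^F+\mbox{diag}(\bb^F))H_{m+1}\theta\ll{\bf0}$, the weight $\xi^F=H_{m+1}\theta$ is positive and decreasing, it is lifted to $\mathbb{S}$ piecewise-constantly on the partition, and the sup/inf structure of $Q^F$ combined with the monotonicity of $\xi^F$ gives exactly the comparison $(Q\xi)(j)\le(Q^F\xi^F)(\phi(j))$ of the paper's display \eqref{r2}, after which the semimartingale argument of Theorems \ref{T-1}/\ref{M-1} concludes. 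Your explicit remarks on the boundedness of $\xi$ above and below and on the absolute convergence of $(Q\xi)(j)$ under \eqref{r3} are points the paper leaves implicit, but they do not change the argument.
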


\begin{proof}
Since $-(Q^F+\mbox{diag}(\bb_1^F,\cdots,\bb_{m+1}^F))H_{m+1}$ is a
nonsingular $M$-matrix, by \cite[Theorem 2.10, p.68]{MY06} there
exists  a vector $\eta^F:=(\eta_1^F,\cdots,\eta_{m+1}^F)^*\gg{\bf0}$
such that
\begin{equation}\label{w3}
(-\lambda_1^F,\cdots,-\lambda_{m+1}^F)^*:=(Q^F+\mbox{diag}(\bb_1^F,\cdots,\bb_{m+1}^F))H_{m+1}\eta^F\ll{\bf0}.
\end{equation}
Set $\xi^F:=H_{m+1}\eta^F$. By the structure of $H_{m+1}$, it is
trivial to see that
\begin{equation*}
\xi_i^F=\eta^F_{m+1}+\cdots+\eta^F_i,~~i=1,\cdots,m+1.
\end{equation*}
This, together with $\eta^F\gg{\bf0}$, yields that $\xi^F\gg{\bf0}$
and $\xi^F_{i+1}<\xi^F_i,i=1,\cdots,m+1$. Next, we extend the vector
$\xi^F$ to be a vector on $\mathbb{S}$ by setting $\xi_r:=\xi_i^F$
for $r\in F_i$.  Moreover, let
$\phi:\mathbb{S}\mapsto\{1,\cdots,m+1\}$ be a map defined by
$\phi(j):=i$ for $j\in F_i.$ Then, by the definition of $\bb_i^F$,
one has
\begin{equation}\label{r1}
\xi_r=\xi_i^F=\xi_{\phi(r)}^F~\mbox{ and }~ \bb_r\le
\bb^F_{\phi(r)}, ~~~r\in F_i.
\end{equation}
 For any $r\in\mathbb{S}$, there exists
$F_i$ such that $r\in F_i$. Recalling the definition of $q_{ij}^F$
and utilizing $\xi^F_{i+1}<\xi^F_i,i=1,\cdots,m+1$, we derive from
\eqref{r1} that, for $r\in F_i$,
\begin{equation}\label{r2}
\begin{split}
(Q\xi)(r) &=\sum_{k<i}\sum_{j\in
F_k}q_{rj}(\xi_j-\xi_r)+\sum_{k>i}\sum_{j\in
F_k}q_{rj}(\xi_j-\xi_r)\\
&=\sum_{k<i}\sum_{j\in
F_k}q_{rj}(\xi_k^F-\xi_i^F)+\sum_{k>i}\sum_{j\in
F_k}q_{rj}(\xi_k^F-\xi_i^F)\\
&\le\sum_{k<i}q_{ik}^F(\xi_k^F-\xi_i^F)+\sum_{k>i}q_{ik}^F(\xi_k^F-\xi_i^F)\\
&=(Q^F\xi^F)(i)=(Q^F\xi^F)(\phi(r)).
\end{split}
\end{equation}
Observe  from \eqref{w3}-\eqref{r2}  that
\begin{equation*}
\begin{split}
&V(X_t)\xi_{\LL_t}\\ &\le
V(x_0)\xi_{i_0}+\int_0^t\{\gamma_s\xi_{\LL_s}+((Q\xi)(\LL_s)+\bb_{\LL_s}\xi_{\LL_s})V(X_s)\}\d
s+\Gamma_1(t)+\Gamma_2(t)\\
 &\le
V(x_0)\xi_{i_0}+\int_0^t\{\gamma_s\xi_{\phi(\LL_s)}^F+((Q^F\xi^F)(\phi(\LL_s))+
\bb^F_{\phi(\LL_s)}\xi_{\phi(\LL_s)}^F)V(X_s)\}\d
s+\Gamma_1(t)+\Gamma_2(t)\\
&\le  |x_0|^2\xi_{i_0}+\mu\int_0^t\gamma_s\d s-\nu\int_0^tV(X_s)\d
s+\Gamma_1(t)+\Gamma_2(t),
\end{split}
\end{equation*}
where $\mu:=\max_{i\in\mathbb{S}_0}\lambda_i^F$,
$\nu:=\min_{i\in\mathbb{S}_0}\lambda_i^F$ due to \eqref{w3}, and
\begin{equation*}
\Gamma_1(t):=2\int_0^t\xi_{\phi(\LL_s)}^F\<\nn
V(X_s),\si(X_s,s,\LL_s)\d W_s\>,~\Gamma_2(t):=\int_0^t\int_\R
V(X_s)\{\xi_{\phi(i_0+h(\LL_s,z))}^F-\xi_{\phi(\LL_s)}^F\}\tt N(\d
s,\d z).
\end{equation*}
Then, the desired assertion follows by imitating an argument of
Theorem \ref{T-1}.
\end{proof}

Before the end of this paper, an example is established to
demonstrate Theorem \ref{eigen}.

\begin{exa}
{\rm Let $\{X_t\}_{t\ge0}$ satisfy a scalar SDE
\begin{equation}\label{*9}
\d X_t=b_{\LL_t}X_t\d t+X_t^2\wedge|X_t|\d W_t,~~~X_0=x_0\neq0
\end{equation}
and $\{\LL_t\}_{t\ge0}$ is a birth-death process on
$\mathbb{S}:=\{1,2,\cdots\}$ with $q_{ii+1}=c_i>0,$
$q_{ii-1}=a_i>0$, and $q_{ij}=0$ for $|i-j|>1$. Let $b_1=-1,
b_i=\kk-i^{-1}, i\ge 2,$ for some $\kk>0,$  and assume
$\sup_{i\in\mathbb{S}}(-q_{ii})<\8$ with $q_{ii}=-(a_i+c_i)$. Note
that
\begin{equation*}
\mathscr{L}_t^{(i)}(|x|)=b_i|x|,~~~i\in\mathbb{S}.
\end{equation*}
Thus  \eqref{r3} holds for $\bb_i=b_i$, and  $\bb_1^F=-1,
\bb_2^F=\kk$. Set $F_1=\{1\}$ and $F_2=\{2,3,\cdots\}$. Due to
$q_{ij}=0$ for $|i-j|>1$. Then we have
\begin{equation*}
-Q^F=\left(\begin{array}{cc}
  -c_1 & c_1\\
  a_2 & -a_2\\
  \end{array}
  \right).
\end{equation*}
As a consequence,  if $\kk\in(0,a_2/(1+c_1)),$  the solution of
\eqref{*9} is almost surely asymptotically stable since $
-(Q^F+\mbox{diag}(\bb_1^F,\bb_2^F))H_2$ is a nonsingular $M$-matrix.
While, under the same condition, Shao and Xi \cite[Example
2.1]{SX14} shows that the solution of \eqref{*9} is asymptotically
stable in probability.
 }
\end{exa}

\section{Stabilization of linear regime-switching diffusions}

Let us now consider the following linear regime-switching diffusions:
\begin{equation}\label{lin-1}
\d X_t=A_{\LL_t}X_t \d t+ \sum^{m}_{k=1} C^{(k)}_{\LL_t}X_t \d W^{(k)}_t,
\end{equation}
on$t\ge 0$ with initial data $x(0)=x_0\in \R^n$ and $\LL_0=i_0\in \mathbb{S}$.

We are required to design a state feedback control $u_t$ in the drift part such that
the corresponding controlled system
\begin{equation}\label{cont-1}
\d X_t=[A_{\LL_t}X_t+B_{\LL_t}u_t] \d t+ \sum^{m}_{k=1} C^{(k)}_{\LL_t} X_t \d W^{(k)}_t,
\end{equation}
becomes almost surely asymptotically stable. Here, the control
$u_t$ is an $\R^{l}$-valued. For each mode $\LL_t=i\in \mathbb{S}:=\{1,2,\cdots, N\},(N<\8)$.
we write $A_{\LL_t}=A_i$, etc. for simplicity, and $A_i,C^{(k)}_{i}$ are $n\times n$ constant matrices while $B_i$
is an $n\times l$ matrix.

Let the linear state feedback control
$
 u_t=K_{\LL_t}X_t
$
depending on the state $X_t$ and Markov chain $\LL_t$,
where $K_{\LL_t}$ is an $l\times n$ matrix.
Hence the closed-loop system becomes
\begin{equation}\label{clos-1}
\d X_t=[A_{\LL_t}X_t+B_{\LL_t}K_{\LL_t}X_t] \d t+ \sum^{m}_{k=1} C^{(k)}_{\LL_t} X_t\d W^{(k)}_t,
\end{equation}

In this section, we give the following result which is used to stabilise \eqref{lin-1} by designing a state feedback
controller in terms of the solutions of a sets of linear matrix inequalities (LMIs), and the average condition.
\begin{thm}\label{stab-0}
{\rm  If there exists a positive definite matrix $\Gamma,$ and matrix $Y_i$, real number $\alpha_i$, such that
the following LMIs hold
\begin{equation}\label{eq-x1}
\left(
  \begin{array}{cc}
    \Phi_{i} & * \\
    \Xi_i & \Theta \\
  \end{array}
\right)<0, ~~ i \in \mathbb{S}
\end{equation}
where
$
\Phi_{i}= (A_i\Gamma+B_iY_i)+(A_i\Gamma+B_iY_i)^T -\alpha_i X, ~
\Xi_i=[(C^{(1)}_{i}\Gamma)^T,(C^{(2)}_{i}\Gamma)^T,
\cdots, (C^{(m)}_{i}\Gamma)^T]^T,
\Theta=\mbox{diag}(-\Gamma,-\Gamma,\cdots,-\Gamma) ,
$
and,
\begin{equation}\label{eq-x2}
\sum_{i=1}^N\mu_i\alpha_i <0,
\end{equation}
where the symbol $*$ denotes the transposed element at the symmetric position.
 Then, the solution of \eqref{clos-1} is almost surely asymptotically stable with respect to state feedback gain
 $K_i=Y_i\Gamma^{-1}.$
}
\end{thm}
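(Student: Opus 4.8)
The plan is to produce a single quadratic Lyapunov function that verifies hypotheses ({\bf H1}) and ({\bf H2}) for the closed-loop system \eqref{clos-1} with $\bb_i=\aa_i$, and then to invoke Theorem \ref{T-1} together with the averaging condition \eqref{eq-x2}. Write $P:=\Gamma^{-1}$, which is positive definite since $\Gamma$ is, and set $V(x):=x^TPx=x^T\Gamma^{-1}x$. Then $V\in C^2(\R^n;\R_+)$, $V(0)=0$ and $\lim_{|x|\to\8}V(x)=\8$. Because all coefficients of \eqref{clos-1} are linear in $x$ (hence globally Lipschitz) and vanish at $x=0$, ({\bf H1}) is automatic, so the entire task reduces to extracting the drift estimate $\mathscr{L}^{(i)}_tV(x)\le\aa_iV(x)$ from the matrix inequalities \eqref{eq-x1}.

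First I would unwind \eqref{eq-x1} by a Schur complement. Denote by $\bar A_i:=A_i+B_iK_i=A_i+B_iY_i\Gamma^{-1}$ the closed-loop drift matrix in mode $i$, and read the $-\aa_i X$ appearing in $\Phi_i$ as $-\aa_i\Gamma$. Since $\Theta=\mbox{diag}(-\Gamma,\cdots,-\Gamma)\ll{\bf0}$, the symmetric block inequality \eqref{eq-x1} (whose $(1,2)$-entry is $\Xi_i^T$) is equivalent to
\begin{equation*}
\Phi_i-\Xi_i^T\Theta^{-1}\Xi_i<0.
\end{equation*}
Reading the definition, $\Xi_i$ stacks the blocks $C^{(k)}_i\Gamma$ vertically, so $\Xi_i^T$ is the block-row $[(C^{(1)}_i\Gamma)^T,\cdots,(C^{(m)}_i\Gamma)^T]$; with $\Theta^{-1}=\mbox{diag}(-\Gamma^{-1},\cdots,-\Gamma^{-1})$ a block multiplication gives
\begin{equation*}
\Xi_i^T\Theta^{-1}\Xi_i=-\sum_{k=1}^m(C^{(k)}_i\Gamma)^T\Gamma^{-1}(C^{(k)}_i\Gamma),
\end{equation*}
so the inequality becomes
\begin{equation*}
(A_i\Gamma+B_iY_i)+(A_i\Gamma+B_iY_i)^T-\aa_i\Gamma+\sum_{k=1}^m(C^{(k)}_i\Gamma)^T\Gamma^{-1}(C^{(k)}_i\Gamma)<0.
\end{equation*}

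Next I would perform a congruence transformation by $P=\Gamma^{-1}$. Pre- and post-multiplying the last display by $P$ (which preserves negative definiteness) and substituting $Y_i=K_i\Gamma$, so that $B_iY_i\Gamma^{-1}=B_iK_i$ and $A_i\Gamma+B_iY_i=\bar A_i\Gamma$, the identities $\Gamma P=P\Gamma=I$ collapse the sandwiched terms and yield
\begin{equation*}
P\bar A_i+\bar A_i^TP+\sum_{k=1}^m(C^{(k)}_i)^TPC^{(k)}_i-\aa_iP<0.
\end{equation*}
On the other hand, a direct It\^o computation for $V(x)=x^TPx$ along \eqref{clos-1} in mode $i$ gives
\begin{equation*}
\mathscr{L}^{(i)}_tV(x)=x^T\Big(P\bar A_i+\bar A_i^TP+\sum_{k=1}^m(C^{(k)}_i)^TPC^{(k)}_i\Big)x,
\end{equation*}
so the matrix inequality above implies $\mathscr{L}^{(i)}_tV(x)\le\aa_iV(x)$ for every $x\in\R^n$. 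Thus ({\bf H2}) holds with this $V$ and $\bb_i=\aa_i$, and since $\sum_{i=1}^N\mu_i\aa_i<0$ by \eqref{eq-x2}, Theorem \ref{T-1} delivers the almost sure asymptotic stability of \eqref{clos-1}.

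The computations are entirely routine linear algebra; the two points requiring care are the correct reading of the stacked block $\Xi_i$ (so that the Schur complement produces $(C^{(k)}_i\Gamma)^T\Gamma^{-1}(C^{(k)}_i\Gamma)$ rather than $C^{(k)}_i\Gamma(C^{(k)}_i)^T$) and the bookkeeping in the congruence step. I expect no genuine obstacle beyond this, since the substantive analytic content — passing from the one-shot drift bound to pathwise convergence — is supplied wholesale by Theorem \ref{T-1}.
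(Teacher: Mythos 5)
Your proposal is correct and follows essentially the same route as the paper's own proof: the Lyapunov function $V(x)=x^T\Gamma^{-1}x$, the Schur-complement reduction of \eqref{eq-x1}, the congruence by $P=\Gamma^{-1}$ with $Y_i=K_i\Gamma$, and the final appeal to Theorem \ref{T-1} via the averaging condition \eqref{eq-x2}. You even correctly read the paper's typo $-\alpha_i X$ as $-\alpha_i\Gamma$, so nothing is missing.
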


Before proceeding further, we give the following lemma which will be used in the proof.
\begin{lem}\label{Sc-C}
{\rm (\cite{BGF})
Let $M, N, P$ be constant matrices with appropriate dimensions such that
$P=P^T>0$ and $M=M^T$. Then $M+NP^{-1}N^T<0$ if\/f
$$\left(
  \begin{array}{cc}
    M & N \\
    N^T & -P \\
  \end{array}
\right)<0.
$$
}
\end{lem}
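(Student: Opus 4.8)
The plan is to prove the stated equivalence by a congruence transformation that block-diagonalizes the $2\times2$ block matrix, decoupling the condition $\mathcal{M}<0$ into a condition on its Schur complement together with the automatically satisfied condition $-P<0$. Write $\mathcal{M}:=\begin{pmatrix} M & N \\ N^T & -P \end{pmatrix}$, and recall the congruence invariance of negative definiteness: for any nonsingular matrix $T$ of compatible size, $\mathcal{M}<0$ holds if and only if $T\mathcal{M}T^T<0$, since $y^T(T\mathcal{M}T^T)y=(T^Ty)^T\mathcal{M}(T^Ty)$ and the map $y\mapsto T^Ty$ is a bijection. This fact, a special case of Sylvester's law of inertia, is the conceptual engine of the argument and, crucially, delivers both directions of the ``if and only if'' at once.

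First I would introduce the block-elimination matrix $T:=\begin{pmatrix} I & NP^{-1} \\ 0 & I \end{pmatrix}$, which is nonsingular because it is block upper triangular with identity diagonal blocks; using $P=P^T>0$ (so that $P^{-1}$ is symmetric) its transpose is $T^T=\begin{pmatrix} I & 0 \\ P^{-1}N^T & I \end{pmatrix}$. Next I would compute the congruence $T\mathcal{M}T^T$ through the two block products, the point being that the off-diagonal blocks cancel exactly: the upper-right block reduces to $N-NP^{-1}P=0$, the lower-left block to $N^T-PP^{-1}N^T=0$, and the upper-left block absorbs precisely the Schur complement, yielding the block-diagonal matrix
$$T\mathcal{M}T^T=\begin{pmatrix} M+NP^{-1}N^T & 0 \\ 0 & -P \end{pmatrix}.$$

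With this block-diagonalization in hand the conclusion is immediate: a symmetric block-diagonal matrix is negative definite if and only if each diagonal block is, so $T\mathcal{M}T^T<0$ is equivalent to the pair of conditions $M+NP^{-1}N^T<0$ and $-P<0$. The hypothesis $P=P^T>0$ renders $-P<0$ automatic, so $T\mathcal{M}T^T<0$ collapses to $M+NP^{-1}N^T<0$; chaining this with the congruence invariance from the first step gives $\mathcal{M}<0\iff M+NP^{-1}N^T<0$, as asserted. I expect no genuine obstacle, the result being classical; the only steps demanding care are the bookkeeping in the block multiplication, especially confirming that both off-diagonal blocks vanish, and the explicit use of $P>0$ to discard the $-P$ block, without which the equivalence would break down.
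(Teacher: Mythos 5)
Your proof is correct: the congruence computation $T\mathcal{M}T^T=\mathrm{diag}(M+NP^{-1}N^T,\,-P)$ checks out, and combining Sylvester's law of inertia with $P>0$ does yield the equivalence in both directions. Note that the paper itself gives no proof of this lemma --- it is quoted verbatim from the cited reference \cite{BGF} --- and your congruence/Schur-complement argument is precisely the standard proof given there, so there is nothing to reconcile.
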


\noindent
{\it Proof of Theorem \ref{stab-0}.}
Set $P=\Gamma^{-1}$, let $V(x)=x^TPx,$
 we computer
\begin{equation*}
\begin{split}
L^{(i)}V(X_t)= & 2 X_t^T P[(A_i+B_iK_i)X_t] + X_t^T \left\{\sum^m_{k=1} (C^{(k)}_{i})^T P C^{(k)}_{i}\right\}X_t\\
=&  X_t^T [P(A_i+B_iK_i)+(A_i+B_iK_i)^T P ]X_t \\
&+ X_t^T \left\{\sum^m_{k=1}  (C^{(k)}_{i})^T P C^{(k)}_{i}\right\}X_t.
\end{split}
\end{equation*}
On the other hand, by Lemma \ref{Sc-C},
we have from \eqref{eq-x1} that
\begin{equation}\label{eq-x30}
\begin{split}
&[(A_i\Gamma+B_iY_i)+(A_i+B_iY_i)^T -\alpha_i \Gamma ] \\
&\qquad + \sum^m_{k=1} (C^{(k)}_{i}\Gamma)^T \Gamma^{-1} C^{(k)}_{i}\Gamma <0,~~ i\in \mathbb{S}.
\end{split}
\end{equation}
And, noting that $ \Gamma= P^{-1}$ and $Y_i=K_i \Gamma$,
pre- and postmultiplying \eqref{eq-x30} by $P $ results in
\begin{equation}\label{eq-x3}
\begin{split}
&[P(A_i+B_iK_i)+(A_i+B_iK_i)^T P ] \\
&\qquad + \sum^m_{k=1} (C^{(k)}_{i})^T P C^{(k)}_{i} - \alpha_i P<0,~~ i\in \mathbb{S}.
\end{split}
\end{equation}
Thus, \eqref{eq-x3} and \eqref{eq-x2} imply that the conditions \eqref{cond-1} and \eqref{cond-2} hold.
So, the required assertion now follows by Theorem \ref{T-1}.

\begin{rem}
{\rm We should also consider the case that the state feedback control is designed in  the diffusion part, however the results are similar, we omit it here.
 }
\end{rem}

Let us discuss an example to illustrate our theory.
\begin{exa}\label{Ex3}
{\rm
Let $W_t$ be a scalar Brownian motion. Let $\LL_t$ be a right-continuous Markov chain taking values in $\mathbb{S}=\{1,2\}$
with generator
$$
\left(
  \begin{array}{cc}
    -1 & 1 \\
    1 & -1 \\
  \end{array}
\right).
$$
Assume that $W_t$ and $\LL_t$ are independent.
Consider a
regime-switching diffusion process $(X_t,\LL_t)$, in which
$\{X_t\}_{t\ge0}$ satisfies a two-dimensional SDE
\begin{equation}\label{eq-x6}
\d X_t= A_{\LL_t}X_t \d t+C_{\LL_t}X_t \d
W_t,~~~X_0=x_0,~~~\LL_0=1,
\end{equation}
on $t\ge 0$, where
$$
 A_1=\left(
   \begin{array}{cc}
     3 & -1 \\
     1 & -4\\
   \end{array}
 \right),~~
A_2= \left(
             \begin{array}{cc}
               -3 & -1 \\
               1 & 2 \\
             \end{array}
           \right),
$$
$$
 C_1=\left(
   \begin{array}{cc}
     1 & 1 \\
     1 & -1\\
   \end{array}
 \right),~~
C_2= \left(
             \begin{array}{cc}
               -1 & -1 \\
               -1 & 1 \\
             \end{array}
           \right),
$$
 By \cite[Theorem 4.3, p.1045]{Kh07}), the solution of \eqref{eq-x6} is unstable (see, Figure 1 and  Figure 2).
Now, we design the linear state feedback control $u_t=K_{\LL_t} X_t$ to
stabilise \eqref{eq-x6}. For this purpose, we consider the controlled system of the form
\begin{equation}\label{eq-x7}
\d X_t=[A_{\LL_t}X_t+B_{\LL_t}K_{\LL_t}X_t] \d t+C_{\LL_t} X_t\d W_t,
\end{equation}
where
$
 B_1=\left(
       \begin{array}{c}
         -10 \\
         0 \\
       \end{array}
     \right)
 ,~~
B_2= \left(
       \begin{array}{c}
         0 \\
         -10 \\
       \end{array}
     \right).
$
\par
It is easy to see that the stationary distribution of $\LL_t$
is $(\frac{1}{2}, \frac{1}{2})$, and  the average condition of \eqref{eq-x2} becomes
\begin{equation}\label{eq-x8}
 \frac{1}{2}\alpha_1+ \frac{1}{2} \alpha_2 <0.
\end{equation}
\par
This, together with  the  LMIs
\begin{equation*}
\left(
  \begin{array}{cc}
    (A_i\Gamma+B_iY_i)+(A_i\Gamma+B_iY_i)^T -\alpha_i \Gamma & * \\
    C_i\Gamma & -\Gamma \\
  \end{array}
\right)<0, ~~ i \in \mathbb{S},
\end{equation*}
 we obtain
$$
 \alpha_1=-2.9074,\alpha_2= 1.4537,
$$
$$
\Gamma=\left(
    \begin{array}{cc}
  0.1543 &  -0.0007 \\
   -0.0007  &  0.1406 \\
    \end{array}
  \right),
$$
and,
$$
Y_1=(0.0882,0.0017), ~~Y_2=(0.0018, 0.0656).
$$
By Theorem \ref{stab-0}, we can then conclude that the state feedback gain
$$
K_1=Y_1 \Gamma^{-1}=(0.5716, 0.0150),~~K_2=Y_2 \Gamma^{-1}=(0.0137, 0.4666).
$$
such that the solution of \eqref{eq-x7} is almost surely asymptotically stable.
The computer simulation supports this result clearly (see, Figure 3).
\begin{figure}[htbp]
\centering
\includegraphics[height=8cm,width=14cm]{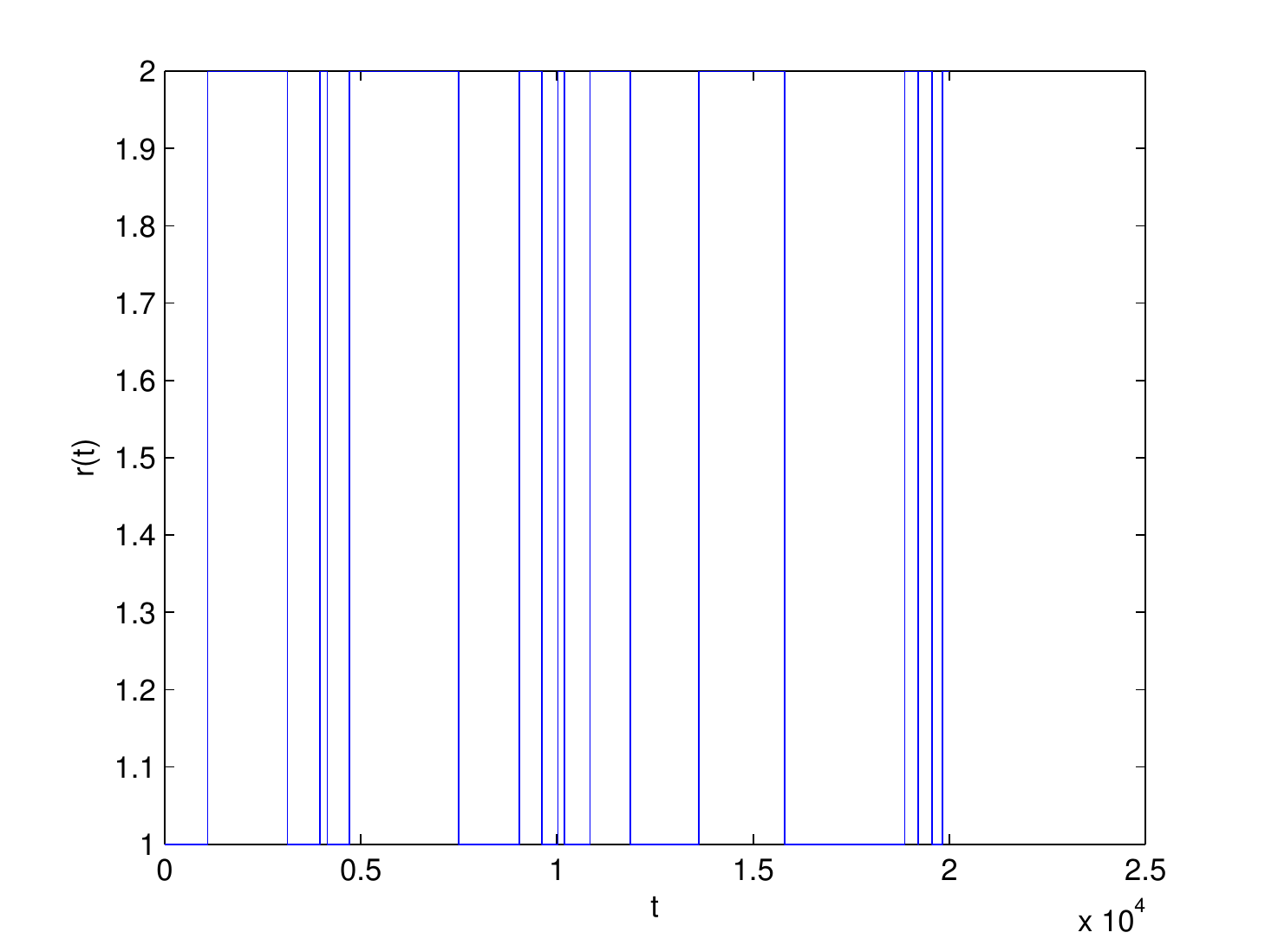}
\caption{ Computer simulation of the paths $\LL_t.$}
\end{figure}

\begin{figure}[htbp]
\centering
\includegraphics[height=8cm,width=14cm]{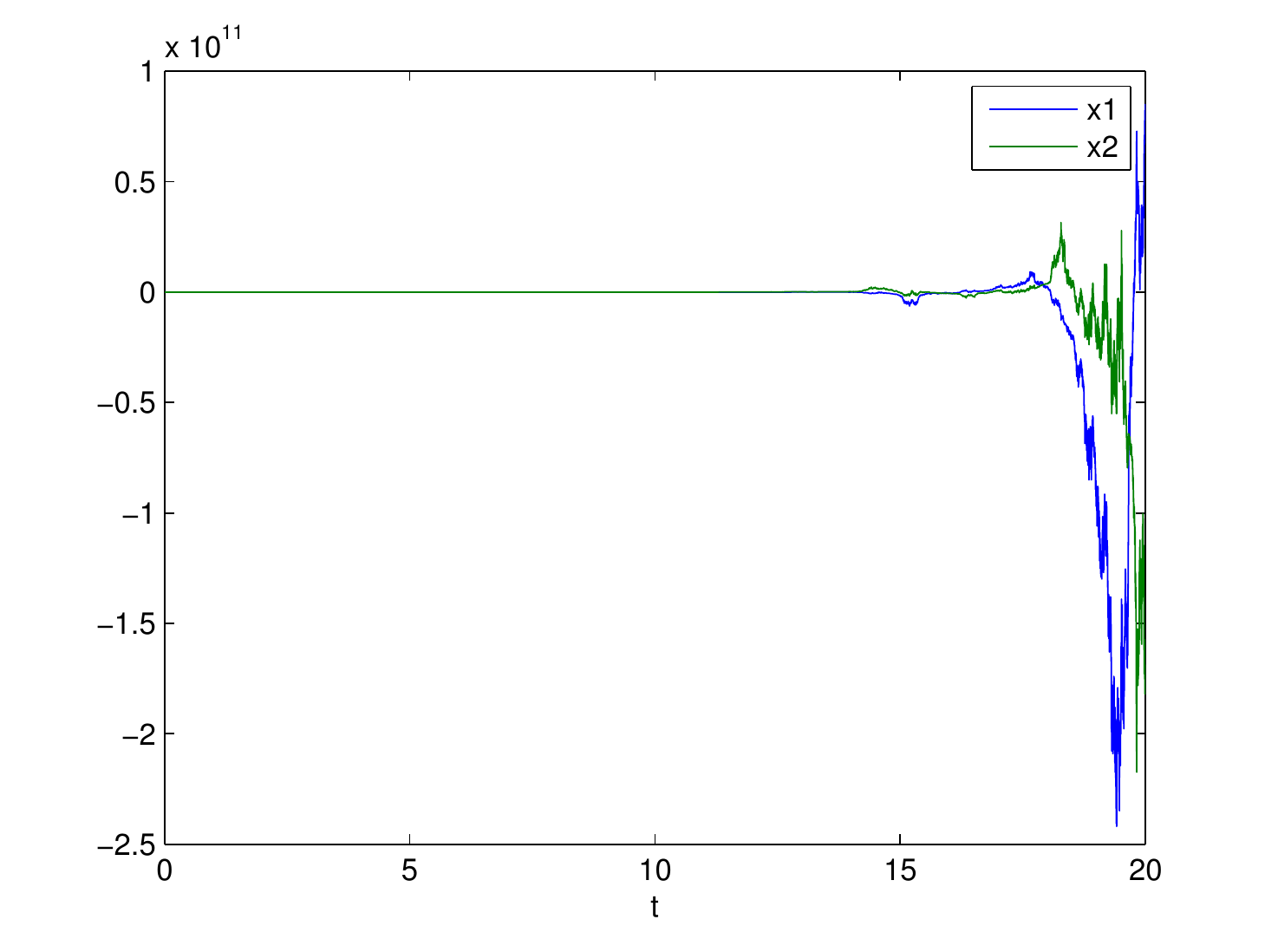}
\caption{Computer simulation of  unstability of $X_t$ in \eqref{eq-x6}. }
\end{figure}

\begin{figure}[htbp]
\centering
\includegraphics[height=8cm,width=14cm]{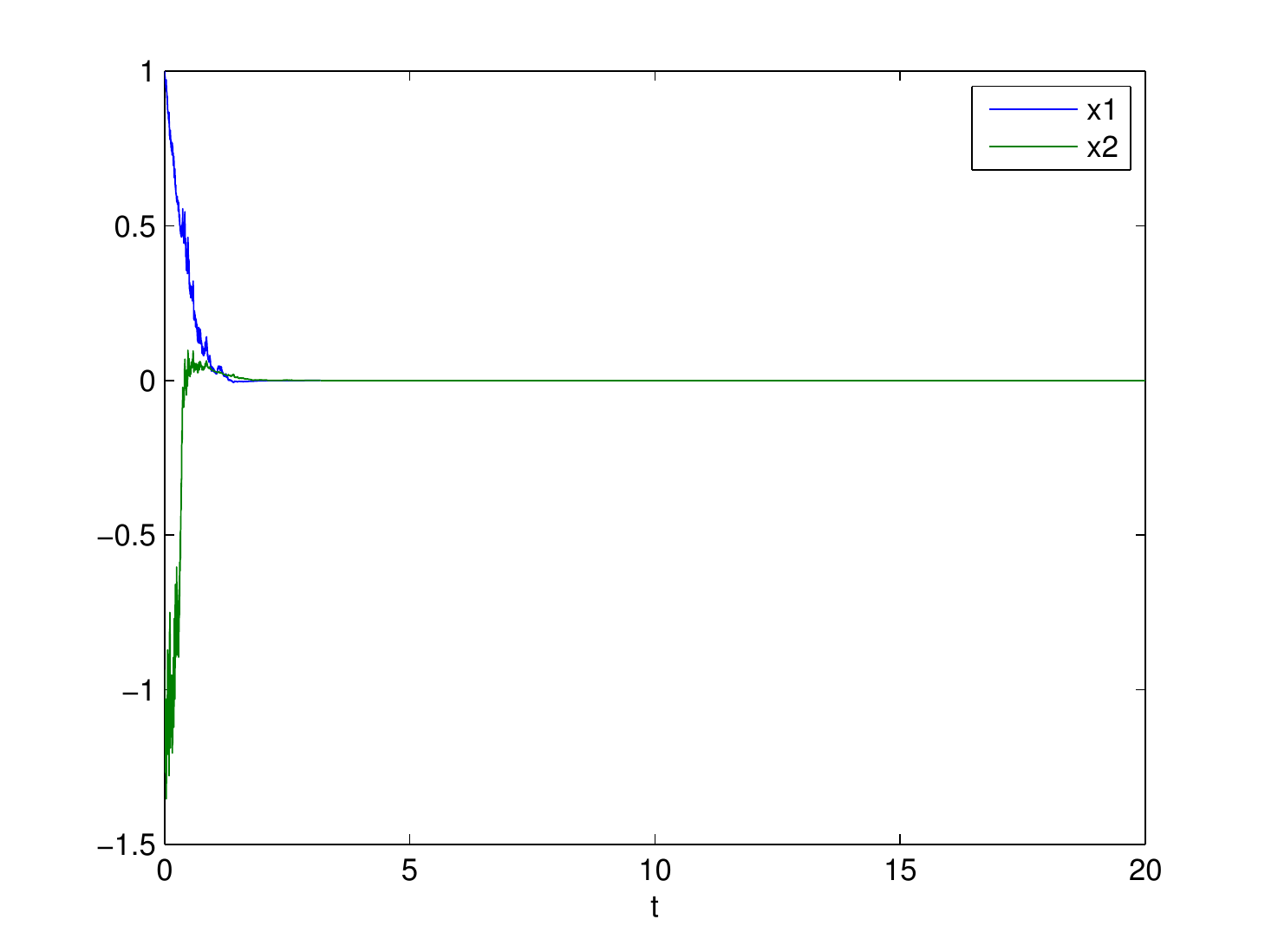}
\caption{Computer simulation of almost sure stabilization of $X_t$ in \eqref{eq-x7}. }
\end{figure}

}
\end{exa}

\newpage

\appendix

\section{Appendix}
Although the latter argument for Theorem 1 is similar to that of
\cite[Theorem 2.1]{YM04},
 we outline the proofs of  \eqref{*021}  and  \eqref{*022} to make the content self-contained.

\smallskip

{\bf Claim \eqref{*021}.} Recall that
\begin{equation}\label{A.2}
\sup_{0\le t<\8}|X_t|<\8~~~\mbox{a.s.}
\end{equation}
and that
\begin{equation}\label{A.3}
\liminf_{t\to \8} V(X_t)=0~~~~~\mbox{a.s.}
\end{equation}%
If  \eqref{*021} is false, then
\begin{equation*}
\P \left\{ \limsup_{t\to \8} V(X_t)>0 \right\}>0.
\end{equation*}
Therefore, there is a number $\vv>0$ such that
\begin{equation}\label{A.5}
\P(\OO_1)\ge 3\vv,
\end{equation}
where $\OO_1=\left\{ \limsup_{t\to \8} V(X_t)>2\vv \right\}$.

By \eqref{A.2}, we find
a positive number $M$, which depends on $\vv$, such that
\begin{equation}\label{A.6}
\P(\OO_2)\ge 1-\vv,
\end{equation}
where
$\OO_2=\left\{ \limsup_{t\to \8} |X_t| < M \right\}$.

From \eqref{A.5} and \eqref{A.6}, we have
\begin{equation}\label{A.7}
\P(\OO_1\cap\OO_2)\ge  2\vv,
\end{equation}
We now define a sequence of stopping times,
\begin{align*}
 & \tau_M:=\inf\{t\ge 0: |X_t|\ge M \},\\
  & \ee_1:=\inf\{t\ge 0: V(X_t)\ge 2\vv \},\\
   & \ee_{2k}:=\inf\{t\ge \ee_{2k-1}: V(X_t)\le \vv \}, ~~k=1,2,\cdots\\
    & \ee_{2k+1}:=\inf\{t\ge \ee_{2k}: V(X_t)\ge 2\vv \},~~k=1,2,\cdots
  \end{align*}
where throughout this paper we set $\inf \emptyset =\8$.  From \eqref{A.3}, we see that
if $\oo\in \OO_1 \cup\OO_2$, then
$\tau_M=\8 $ and $\ee_k <\8, ~~ k=1,2,\cdots. $

 By \eqref{*2}, we get
\begin{align}\label{A.08}
 \8& > \E \int^{\8}_0 V(X_t) \d t \nonumber \\
  & \ge \sum^{\8}_0 \E \left[ {\bf 1}_{\{\ee_{2k-1}<\8, \ee_{2k}<\8, \tau_M=\8\}} \int^{\ee_{2k}}_{\ee_{2k-1}} V(X_t)\d t \right] \nonumber\\
   & \ge \vv \sum^{\8}_0 \E \left[ {\bf 1}_{\{\ee_{2k-1}<\8, \tau_M=\8\}} (\ee_{2k}-\ee_{2k-1})  \right]
  \end{align}
where we use the fact $\ee_{2k}<\8$ whenever $\ee_{2k-1}<\8$.

By ({\bf H1}), we have that there exists a positive constant $L_M$ such that
\begin{equation}\label{A.8}
\begin{split}
& \E \left[  {\bf 1}_{\{\ee_{2k-1} \wedge \tau_M <\8  \}} \sup_{0\le t\le T}
| X_{\tau_M \wedge (\ee_{2k-1}+t)} -  X_{\tau_M \wedge
\ee_{2k-1}} |^2
 \right] \\
 & \le 2L^2_M(T+4)T.
 \end{split}
\end{equation}
From the uniformly continuous of the function $V$ on the bounded closed ball $\bar{B}_M$,
we also choose $ \delta= \delta(\vv)>0$ such that
\begin{equation}\label{A.9}
|V(x)-V(y)|< \vv/2,~~~x,y \in \bar{B}_M, ~~ |x-y|< \delta.
\end{equation}

Furthermore, we choose $T=T(\vv, \delta, M)$ sufficiently small such that
\begin{equation*}
  \frac{2L^2_M(T+4)T}{\delta^2}<\vv.
\end{equation*}
From \eqref{A.8} and \eqref{A.7}, we have
\begin{equation*}
\begin{split}
&\P\bigg( \{\ee_{2k-1}<\8, \tau_M=\8 \}  \\
&\,\,\, \left.
\bigcap  \{ \sup_{0\le t\le T} |X_{ \ee_{2k-1}+t}-X_{ \ee_{2k-1}}|< \delta  \}   \right) \ge \vv.
\end{split}
\end{equation*}

Using \eqref{A.9}, we obtain
\begin{equation}\label{A.10}
\begin{split}
&\P\bigg(
 \{\ee_{2k-1}<\8, \tau_M=\8 \} \\
&\,\,\, \left.
\bigcap  \{ \sup_{0\le t\le T} |V(X_{ \ee_{2k-1}+t})-V(X_{ \ee_{2k-1}})|< \vv  \}   \right) \ge \vv.
\end{split}
\end{equation}

Set
\begin{equation*}
\bar{\OO}_k= \left\{  \sup_{0\le t\le T} |V(X_{ \ee_{2k-1}+t})-V(X_{ \ee_{2k-1}})|< \vv     \right\}.
\end{equation*}
We see
\begin{equation*}
\ee_{2k}(\oo)- \ee_{2k-1}(\oo) \ge T
\end{equation*}
when $ \oo \in \{ \ee_{2k-1}<\8, \tau_M=\8 \} \cap \bar{\OO}_k$.

From \eqref{A.08} and \eqref{A.10}, we derive
\begin{align}\label{A.11}
\begin{split}
 \8 & >    \vv \sum^{\8}_0 \E \left[ {\bf 1}_{\{\ee_{2k-1}<\8, \tau_M=\8\}} (\ee_{2k}-\ee_{2k-1})  \right] \\
   & \ge \vv T  \sum^{\8}_0 \P ( \{\ee_{2k-1}<\8, \tau_M=\8\} \cap  \bar{\OO}_k )  \\
   &\ge \ee T  \sum^{\8}_0 \vv =\8.
 \end{split}
  \end{align}
which is a contradiction. Therefore,  \eqref{*021} must hold.

{\bf Claim \eqref{*022}.} 
If \eqref{*022} is false, there exists some $\bar{\oo} \in \OO_0$
such that
\begin{equation*}
  \limsup_{t\to \8} |X_t(\bar{\oo})|>0 .
\end{equation*}
Then, for some $\varsigma >0,$ there is a subsequence $ X_{t_k}(\bar{\oo})$ of $X_t(\bar{\oo})$
such that $ |X_{t_k}(\bar{\oo})|\ge \varsigma, ~~ k\ge 1 . $

By \eqref{A.2}, there is an increasing subsequence $ X_{\bar{t}_k}(\bar{\oo})$ of $ X_{t_k}(\bar{\oo})$ such that
$\lim_{k\to \8}X_{\bar{t}_k}(\bar{\oo})=z\in \R^n $ with $|z|\ge \varsigma$. Therefore
$\lim_{k\to \8}V(X_{\bar{t}_k}(\bar{\oo}))=V(z)>0. $

But, from \eqref{*021}, we see that this is a contradiction.
Therefore,   \eqref{*022}holds.

\section*{Acknowledgements}
The authors would also like to thank the financial supports from the National Natural Science Foundation of China
(No.61374085, 11401592, 61473213).

\end{document}